\documentclass[a4paper,12pt]{amsart}
\title[Congruence spectra and geometric reconstruction of $\boldsymbol{T}$-algebras]{Congruence spectra and geometric reconstruction of $\boldsymbol{T}$-algebras}

\author{Song JuAe}
\address{Faculty of Mathematics, Kyushu University, 744 Motooka, Nishi-ku, Fukuoka, 819-0395, Japan.}
\email{songjuae@math.kyushu-u.ac.jp}

\subjclass[2020]{14T10, 14T20, 15A80, 16Y60}
\keywords{prime congruences, congruence spectrum, tropical curves, rational function semifields, parallel rays}

\usepackage{xcolor}
\definecolor{cadmiumgreen}{rgb}{0.0, 0.42, 0.24}
\usepackage{amsmath,amssymb}
\usepackage{amsthm}
\usepackage{mathrsfs}
\usepackage{appendix}
\usepackage{geometry}
\usepackage[
    colorlinks=true,
    linkcolor=blue,
    urlcolor=blue,
    citecolor=cadmiumgreen,
]{hyperref}

\theoremstyle{plain}
\newtheorem{theorem}{Theorem}[section]
\newtheorem{proposition}[theorem]{Proposition}

\newtheorem{corollary}[theorem]{Corollary}

\theoremstyle{definition}
\newtheorem{definition}[theorem]{Definition}
\newtheorem{remark}[theorem]{Remark}
\newtheorem{example}[theorem]{Example}

\begin{document}

\begin{abstract}
We develop a geometric theory of congruence spectra for semirings.
We define the congruence spectrum for a semiring as the set of prime congruences on it, equip it with the Zariski topology, and construct a structure sheaf.
For a class of semirings including $\boldsymbol{B}$-algebras, we establish quasi-compactness of their congruence spectra.

We then prove reconstruction results for congruence spectra of quotients $K/\boldsymbol{E}(V)$, where $V$ is a subset of Euclidean space, $K$ is a tropical Laurent polynomial semiring, a tropical Laurent polynomial function semiring, or a tropical rational function semifield, and $\boldsymbol{E}(V)$ is the congruence of pairs of elements of $K$ that agree on $V$.
The global sections of the structure sheaf recover $K/\boldsymbol{E}(V)$, and the corresponding algebraic and geometric categories are contravariantly equivalent.

For finite unions of $\boldsymbol{R}$-rational polyhedral sets $V$, we completely classify the prime congruences on $K / \boldsymbol{E}(V)$.
As an application, we study rational function semifields of tropical curves with parallel rays.
We show that their congruence spectra recover the underlying curves together with the parallelism of rays and their intrinsic metrics.
\end{abstract}

\maketitle

\section{Introduction}
	\label{section1}

\subsection{Background and motivation}
    \label{subsection1.1}

The construction of affine schemes from commutative rings is one of the fundamental principles of algebraic geometry.
Given a ring $A$, the spectrum $\operatorname{Spec}(A)$ together with its structure sheaf $\mathcal{O}_{\operatorname{Spec}(A)}$ provides a geometric object whose global sections recover the original algebra:
\begin{align*}
\Gamma(\operatorname{Spec}(A),\mathcal{O}_{\operatorname{Spec}(A)}) \cong A.
\end{align*}
This correspondence provides a bridge between algebra and geometry, allowing algebraic objects to be recovered from their geometric realizations.

In the absence of subtraction, congruences provide the appropriate algebraic replacement of ideals for semirings.
This suggests replacing the prime ideal spectrum by a space of prime congruences.
To obtain an analogue of affine schemes, however, we equip this space with a structure sheaf and investigate whether the original semiring is recovered from its global sections.

\subsection{Main results}
    \label{subsection1.2}

The main results of this paper are summarized as follows.

\begin{itemize}
\item We construct the congruence spectrum $\operatorname{CSpec}(S)$ together with the Zariski topology and its structure sheaf $\mathcal{O}_{\operatorname{CSpec}(S)}$ for a semiring $S$.
For a class including $\boldsymbol{B}$-algebras, we establish quasi-compactness of their congruence spectra.
Here, $\boldsymbol{B}$ denotes the Boolean semifield.
\item For a subset $V$ of the $n$-dimensional Euclidean space $\boldsymbol{R}^n$, we prove that
\begin{align*}
\Gamma(\operatorname{CSpec}(K / \boldsymbol{E}(V)), \mathcal{O}_{\operatorname{CSpec}(K / \boldsymbol{E}(V))}) \cong K / \boldsymbol{E}(V),
\end{align*}
where $K$ is one of the following semirings and semifields in $n$ variables: the tropical Laurent polynomial semiring $\boldsymbol{T}[\boldsymbol{X}^{\pm}]_n$, the tropical Laurent polynomial function semiring $\overline{\boldsymbol{T}[\boldsymbol{X}^{\pm}]}_n$, and the tropical rational function semifield $\overline{\boldsymbol{T}(\boldsymbol{X})}_n$.
Here, $\boldsymbol{E}(V)$ denotes the congruence of pairs of elements of $K$ that agree on $V$.
\item We establish a categorical equivalence between the category of $\boldsymbol{T}$-algebras isomorphic to some $\overline{\boldsymbol{T}(\boldsymbol{X})}_n / \boldsymbol{E}(V)$ with $\boldsymbol{T}$-algebra homomorphisms and the category of the pairs $(\operatorname{CSpec}(S), \mathcal{O}_{\operatorname{CSpec}(S)})$ with appropriate morphisms for such $\boldsymbol{T}$-algebras $S$.
The same categorical equivalence also holds with $\overline{\boldsymbol{T}(\boldsymbol{X})}_n$ replaced by $\boldsymbol{T}[\boldsymbol{X}^{\pm}]_n$ or $\overline{\boldsymbol{T}[\boldsymbol{X}^{\pm}]}_n$.
\item We completely classify the prime congruences in $\operatorname{CSpec}(K / \boldsymbol{E}(V))$ when $V$ is a finite union of nonempty $\boldsymbol{R}$-rational polyhedral sets.
\item As a fundamental class of examples, we study rational function semifields of tropical curves with parallel rays.
We show that their congruence spectra recover the underlying tropical curves together with the parallelism of rays and their intrinsic metrics.
\end{itemize}

Together, these results show that the congruence spectrum equipped with its structure sheaf captures not only the algebraic structure of the original semiring, but also geometric information in the tropical setting.

\subsection{Comparison with previous works}
    \label{subsection1.3}

There have been several approaches to developing algebraic foundations for tropical geometry using semirings and related algebraic structures.

One line of development concerns scheme-theoretic approaches to tropical geometry.
Giansiracusa and Giansiracusa introduced bend relations and bend congruences to define scheme-theoretic structures associated with tropicalizations (\cite{Giansiracusa=Giansiracusa2}).
Their approach provides a refinement of the underlying tropical variety by retaining congruence-theoretic information.
This framework was further developed in several directions.
For example, Maclagan and Rinc\'on developed the theory of tropical ideals and studied the associated tropical scheme structures, as well as their relations with tropical cycles and valuated matroids (\cite{Maclagan=Rincon1,Maclagan=Rincon2}).

Another line of research focuses on prime congruences of idempotent semirings.
Jo\'{o} and Mincheva introduced and studied prime congruences and the corresponding notion of Krull dimension for $\boldsymbol{B}$-algebras (\cite{Joo=Mincheva}).
These works have yielded explicit results on the Krull dimensions of quotient $\boldsymbol{T}$-algebras of tropical polynomial and Laurent polynomial semirings and tropical rational function semifields by several types of congruences (\cite{Joo=Mincheva3, Nakajima=Song}).

More recently, Friedenberg and Mincheva gave a geometric classification of prime congruences containing a given congruence with a finite tropical basis on a toric semiring (\cite{Friedenberg=Mincheva}).
Their results provide strong characterizations of such prime congruences and establish, among other applications, a connection between the bend-congruence approach to tropical geometry and tropical function semirings.
Tanaka also developed a framework for studying spaces of prime congruences on tropical algebras using flags, and constructed global spaces by gluing such prime congruence spaces in analogy with toric geometry (\cite{Tanaka}). 

While spaces of prime congruences and their topologies have been studied in several previous works, our primary object is the congruence spectrum equipped with a structure sheaf.
This allows us to study the relation between the algebraic structure of a semiring and the local and global geometry of its congruence spectrum in a manner analogous to affine geometry.

\subsection{Organization}
    \label{subsection1.4}

The paper is organized as follows.
In Section 2, we recall the basic definitions and results used throughout the paper.
In Section 3, we introduce the congruence spectrum, equip it with the Zariski topology and a structure sheaf, and establish the categorical equivalences.
In Section 4, we study the case where the underlying subset is a finite union of nonempty $\boldsymbol{R}$-rational polyhedral sets and classify the prime congruences.
In Section 5, we apply the results to tropical curves with parallel rays and investigate how their geometry is reflected in their congruence spectra.

\section*{Acknowledgements}
The author thanks Yasuhito Nakajima, Kentaro Tanaka, Takaaki Ito and Masanori Kobayashi for their helpful comments.
This work was supported by JSPS KAKENHI Grant Number 25K17230.
During the preparation of this manuscript, the author used generative AI solely for English language editing and abstract structuring. 
The mathematical content and results remain entirely the work of the author.

\section{Preliminaries}
	\label{section2}

In this section, we recall the basic notions and results on semirings, congruences, and tropical rational function semifields that will be used throughout the paper.

\subsection{Semirings, algebras and semifields}
	\label{subsection2.1}

In this paper, a \textit{semiring} $S$ is a commutative semiring with the absorbing identity $0_S$ for addition $+$ and the identity $1_S$ for multiplication $\cdot$, i.e., a nonempty set $S$ with two binary operations $+$ and $\cdot$ satisfying the following four conditions:

$(1)$ the pair $(S, +)$ is a commutative monoid with an identity $0_S$,

$(2)$ the pair $(S, \cdot)$ is a commutative monoid with an identity $1_S$,

$(3)$ the multiplication $\cdot$ distributes over the addition $+$, i.e., $(a + b) \cdot c = a \cdot c + b \cdot c$ holds for any $a,b, c \in S$, and

$(4)$ the zero element $0_S$ is absorbing, i.e., $0_S \cdot a = 0_S$ holds for any $a \in S$.

If every nonzero element of a semiring $S$ is multiplicatively invertible and $0_S \not= 1_S$, then $S$ is called a \textit{semifield}.

The set $\boldsymbol{T} := \boldsymbol{R} \cup \{ -\infty \}$ equipped with the tropical operations:
\begin{align*}
a \oplus b := \operatorname{max}\{ a, b \} \quad	\text{and} \quad a \odot b := a + b,
\end{align*}
where $a, b \in \boldsymbol{T}$ and $a + b$ stands for the usual sum of $a$ and $b$, becomes a semifield.
Here, for any $a \in \boldsymbol{T}$, we handle $-\infty$ as follows:
\begin{align*}
a \oplus (-\infty) = (-\infty) \oplus a = a \quad \text{and} \quad a \odot (-\infty) = (-\infty) \odot a = -\infty.
\end{align*}
This triple $(\boldsymbol{T}, \oplus, \odot)$ is called the \textit{tropical semifield}.
The subset $\boldsymbol{B} := \{ 0, -\infty \}$ of $\boldsymbol{T}$ becomes a semifield with tropical operations of $\boldsymbol{T}$ and is called the \textit{Boolean semifield}.

The \textit{tropical Laurent polynomials} are defined as Laurent polynomials with respect to tropical operations $\oplus$ and $\odot$, where the coefficients are taken in $\boldsymbol{T}$.
The set of all tropical Laurent polynomials in $n$ variables is denoted by $\boldsymbol{T}[X_1^{\pm}, \ldots, X_n^{\pm}] = \boldsymbol{T}[\boldsymbol{X}^{\pm}]_n$.
It becomes a semiring with two tropical operations and is called the \textit{tropical Laurent polynomial semiring}.
Throughout this paper, our usage of the symbol $a \odot \boldsymbol{X}^{\odot \boldsymbol{m}}$ with $a \in \boldsymbol{T}$ and $\boldsymbol{m} = (m_1, \ldots, m_n) \in \boldsymbol{Z}^n$ means the tropical monomial $a \odot X_1^{\odot m_1} \odot \cdots \odot X_n^{\odot m_n}$.

A map $\varphi : S_1 \to S_2$ between semirings is a \textit{semiring homomorphism} if for any $a, b \in S_1$,
\begin{align*}
\varphi(a + b) = \varphi(a) + \varphi(b), \	\varphi(a \cdot b) = \varphi(a) \cdot \varphi(b), \	\varphi(0_{S_1}) = 0_{S_2}, \	\text{and}\	\varphi(1_{S_1}) = 1_{S_2}.
\end{align*}
When $\varphi$ is a bijective semiring homomorphism, it is a \textit{semiring isomorphism}.
Then $S_1$ is \textit{isomorphic to} $S_2$ as a semiring. 
Given a semiring homomorphism $\varphi:S_1\to S_2$, the pair $(S_2,\varphi)$ is called an $S_1$-algebra; when the structure homomorphism is understood, we simply write $S_2$.
In particular, if both $S_1$ and $S_2$ are semifields and $\varphi$ is injective, then $S_2$ is a \textit{semifield over $S_1$}.
Then $S_1$ is frequently identified with its image by $\varphi$ in $S_2$.
For a semiring $S_1$, a map $\psi : (S_2, \varphi) \to (S_2^{\prime}, \varphi^{\prime})$ between $S_1$-algebras is a \textit{$S_1$-algebra homomorphism} if $\psi$ is a semiring homomorphism and $\varphi^{\prime} = \psi \circ \varphi$.
When there is no confusion, we write $\psi : S_2 \to S_2^{\prime}$ simply.
A bijective $S_1$-algebra homomorphism $S_2 \to S_2^{\prime}$ is a \textit{$S_1$-algebra isomorphism}.
Then $S_2$ and $S_2^{\prime}$ are said to be \textit{isomorphic} and denoted by $S_2 \cong S_2^{\prime}$.

A semiring $S$ is a $\boldsymbol{B}$-algebra if and only if it is \textit{(additively) idempotent}, i.e., $a + a = a$ holds for any $a \in S$.
Both $\boldsymbol{T}$ and $\boldsymbol{T}[\boldsymbol{X}^{\pm}]_n$ are $\boldsymbol{B}$-algebras.
When $S$ is additively idempotent, we can define a partial order as $a \ge b$ if and only if $a + b = a$.
A $\boldsymbol{B}$-algebra $S$ is said to be \textit{cancellative} if whenever $a \cdot b = a \cdot c$ for some $a, b, c \in S$, then either $a = 0_S$ or $b = c$.
If $S$ is cancellative, its semifield of fractions $Q(S)$ can be constructed analogously to the field of fractions of an integral domain.
In this case, the map $S \to Q(S); a \mapsto a/1_S$ becomes an injective $\boldsymbol{B}$-algebra homomorphism.

\subsection{Congruences}
	\label{subsection2.2}

A \textit{congruence} $C$ on a semiring $S$ is a subset of $S^2 = S \times S$ satisfying

$(1)$ $(a, a) \in C$ holds for any $a \in S$,

$(2)$ if $(a, b) \in C$, then $(b, a) \in C$,

$(3)$ if $(a, b) \in C$ and $(b, c) \in C$, then $(a, c) \in C$,

$(4)$ if $(a, b) \in C$ and $(c, d) \in C$, then $(a + c, b + d) \in C$, and

$(5)$ if $(a, b) \in C$ and $(c, d) \in C$, then $(a \cdot c, b \cdot d) \in C$.

The set $S^2$ is a congruence on $S$ called the \textit{improper} congruence on $S$.
Congruences other than the improper congruence are said to be \textit{proper}.
Quotients by congruences can be considered in the usual sense and the quotient semiring of $S$ by the congruence $C$ is denoted by $S / C$.
Then the natural surjection $\pi_C : S \twoheadrightarrow S / C$ is a semiring homomorphism.

The intersection of (possibly infinitely many) congruences is again a congruence.
For a subset $T$ of $S^2$, let $\langle T \rangle_S$ be the smallest congruence on $S$ containing $T$, i.e., the intersection of all congruences on $S$ containing $T$.

For a semiring homomorphism $\psi : S_1 \to S_2$, the \textit{kernel congruence} $\operatorname{Ker}(\psi)$ of $\psi$ is the congruence $\{ (a, b) \in S_1^2 \,|\, \psi(a) = \psi(b) \}$ on $S_1$.
Also, for a congruence $C$ on $S_2$, the set $\psi^{-1}(C) := \{ (a, b) \in S_1^2 \,|\, (\psi(a), \psi(b)) \in C \}$ is a congruence on $S_1$.
For two congruences $E, F$ on a semiring $S$ such that $E \subset F$, the image $F/E$ of $F$ under the natural map $S^2\to(S/E)^2$ is a congruence on $S / E$.

\subsection{Prime congruences and Krull dimensions}
	\label{subsection2.3}

A congruence $C$ on a semiring $S$ is \textit{prime} if $C$ is proper and $(a \cdot c + b \cdot d, a \cdot d + b \cdot c) \in C$ implies that $(a, b) \in C$ or $(c, d) \in C$ holds.
This definition was given in \cite{Joo=Mincheva}.

For a semiring homomorphism $\psi : S_1 \to S_2$ and a prime
congruence $P$ on $S_2$, the congruence $\psi^{-1}(P)$ on $S_1$ is prime.

We define the \textit{Krull dimension} $\operatorname{dim}S$ of $S$ as the supremum of all
integers $l \ge 0$ such that there exists a chain $P_0 \subsetneq P_1 \subsetneq \cdots \subsetneq P_l$ of prime congruences on $S$.

Let $U$ be an $l \times (n + 1)$-matrix with entries in $\boldsymbol{R}$ with an integer $l$ such that $1 \le l \le n + 1$.
This matrix $U$ is \textit{t-admissible} if

$(1)$ its first column $\boldsymbol{u}_1$ is the zero vector $\boldsymbol{0} \in \boldsymbol{R}^l$ or the first nonzero entry of $\boldsymbol{u}_1$ is positive, and 

$(2)$ for any integer $i$ such that $1 \le i \le l$, its $i$th row is \textit{irredundant} on $\boldsymbol{R} \times \boldsymbol{Z}^n$, i.e., there exist $a \in \boldsymbol{R}$ and $\boldsymbol{m} \in \boldsymbol{Z}^n$ such that the first nonzero entry of $U \begin{pmatrix} a \\ \boldsymbol{m} \end{pmatrix}$ is the $i$th entry.

For convenience, we also regard $U(0) := \begin{pmatrix} 0 & \cdots  & 0\end{pmatrix}$ as t-admissible throughout this paper.

When $U$ above is t-admissible, then by \cite[Proposition~2.10(ii)]{Joo=Mincheva}, $U$ defines a prime congruence $P(U)$ on $\boldsymbol{T}[\boldsymbol{X}^{\pm}]_n$ generated by the set of pairs
\begin{align*}
\left(a_1 \odot \boldsymbol{X}^{\odot \boldsymbol{m}_1} \oplus a_2 \odot \boldsymbol{X}^{\odot \boldsymbol{m}_2}, a_1 \odot \boldsymbol{X}^{\odot \boldsymbol{m}_1} \right)
\end{align*}
such that $U \begin{pmatrix} a_1 - a_2 \\ \boldsymbol{m}_1 - \boldsymbol{m}_2 \end{pmatrix}$ is $\boldsymbol{0}$ or its first nonzero entry is positive.
Note that $P(U)$ is written as $P(U)_{\boldsymbol{T}}$ in \cite{Joo=Mincheva}.
For $1 \le i \le l$, let $U(i)$ denote the $i \times (n + 1)$-matrix consisting of the first $i$ rows of $U$.
By definition, $U(i)$ is t-admissible for any $1 \le i \le l$, and
\begin{align*}
    P(U) = P(U(l)) \subsetneq P(U(l - 1)) \subsetneq \cdots \subsetneq P(U(1)) \subsetneq P(U(0)).
\end{align*}
By \cite[Theorem~4.13(iii)]{Joo=Mincheva}, every proper congruence on $\boldsymbol{T}[\boldsymbol{X}^{\pm}]_n$ containing $P(U)$ is of the form $P(U(i))$ for some $0 \le i \le l$.
In particular, $\operatorname{dim}\boldsymbol{T}[\boldsymbol{X}^{\pm}]_n / P(U(i)) = i$.
Moreover, by \cite[Theorem~4.14(i)]{Joo=Mincheva}, for every prime congruence $P$ on $\boldsymbol{T}[\boldsymbol{X}^{\pm}]_n$, there exists a t-admissible matrix $U$ such that $P = P(U)$.

\subsection{Tropical rational function semifields and congruence varieties}
	\label{subsection2.4}

Let $\overline{C}$ be the set $    \left\{ (f, g) \in \boldsymbol{T}[\boldsymbol{X}^{\pm}]_n^2 \,\middle|\, \forall x \in \boldsymbol{R}^n, f(x) = g(x) \right\}$.
Then $\overline{C}$ is a congruence on $\boldsymbol{T}[\boldsymbol{X}^{\pm}]_n$ and the semiring $\overline{\boldsymbol{T}[X_1^{\pm}, \ldots, X_n^{\pm}]} = \overline{\boldsymbol{T}[\boldsymbol{X}^{\pm}]}_n := \boldsymbol{T}[\boldsymbol{X}^{\pm}]_n / \overline{C}$ is a cancellative $\boldsymbol{B}$-algebra by \cite[Lemma~4.3]{Nakajima=Song}.
We call it the \textit{tropical Laurent polynomial function semiring}.
We call its semifield of fractions the \textit{tropical rational function semifield} and write it as $\overline{\boldsymbol{T}(X_1, \ldots, X_n)} = \overline{\boldsymbol{T}(\boldsymbol{X})}_n$.
In what follows, by abuse of notation, the image of each $X_i$ in $\overline{\boldsymbol{T}(\boldsymbol{X})}_n$ is again written by $X_i$.
Let $K$ be $\boldsymbol{T}[\boldsymbol{X}^{\pm}]_n, \overline{\boldsymbol{T}[\boldsymbol{X}^{\pm}]}_n$ or $\overline{\boldsymbol{T}(\boldsymbol{X})}_n$.
For a subset $T$ of $K^2$, we define $\boldsymbol{V}(T) := \{ x \in \boldsymbol{R}^n \,|\, \forall (f, g) \in T, f(x) = g(x)\}$.
Since $\boldsymbol{V}(T) = \boldsymbol{V}(\langle T \rangle_K)$, we call it the \textit{congruence variety} associated with $T$.
Declaring the congruence varieties to be the closed subsets of
$\boldsymbol{R}^n$ defines a topology on $\boldsymbol{R}^n$,
which coincides with the Euclidean topology by
\cite[Proposition~3.8]{JuAe5} (and by the same argument for the cases $K = \boldsymbol{T}[\boldsymbol{X}^{\pm}]_n, \overline{\boldsymbol{T}[\boldsymbol{X}^{\pm}]}_n$).
For a subset $V$ of $\boldsymbol{R}^n$, we write
\begin{align*}
\boldsymbol{E}(V) := \left\{ (f, g) \in K^2 \,\middle|\, \forall x \in V, f(x) = g(x) \right\}.
\end{align*}
Then $\boldsymbol{E}(V)$ is a congruence on $K$.
Note that $\overline{C} = \boldsymbol{E}(\boldsymbol{R}^n)$ for $K = \boldsymbol{T}[\boldsymbol{X}^{\pm}]_n$.

\section{Congruence spectrum}
    \label{section3}

In this section, we develop the basic geometric framework associated with the congruence spectrum of a semiring.
We introduce the Zariski topology, establish quasi-compactness for a class including $\boldsymbol{B}$-algebras, construct the structure sheaf, and study the resulting functorial and categorical structures.

\subsection{Congruence spectrum and Zariski topology}
    \label{subsection3.1}

For a semiring $S$, we define the \textit{congruence spectrum} of $S$, denoted by $\operatorname{CSpec}(S)$, to be the set of prime congruences on $S$.
For a congruence $C$ on $S$, define
\begin{align*}
V(C) := \{ P \in \operatorname{CSpec}(S) \,|\, C \subset P \}.    
\end{align*}
By \cite[Proposition 2.4]{Tanaka}, the sets $V(C)$ satisfy the axioms of a closed set system.
Hence they define the \textit{Zariski topology} on $\operatorname{CSpec}(S)$.

\begin{example}
    \label{ex1}
It is well-known that for a $\boldsymbol{B}$-algebra $S$ consisting of at least two elements, $\operatorname{CSpec}(S)$ has at least one element.
In fact, since $0_S \not= 1_S$ in this case, the kernel congruence of the surjective $\boldsymbol{B}$-algebra homomorphism $S \twoheadrightarrow \boldsymbol{B}; s \not= 0_S \mapsto 0, 0_S \mapsto -\infty$ is prime by \cite[Proposition~2.10(ii)]{Joo=Mincheva}.
Also, this means that every proper congruence $C$ on $S$ is contained in the kernel congruence of $S / C \twoheadrightarrow \boldsymbol{B}$ by \cite[Proposition~2.13]{Joo=Mincheva2}.
In particular, for such a prime congruence $P$, the whole space $\operatorname{CSpec}(S)$ is the unique open subset of $\operatorname{CSpec}(S)$ containing $P$.
Thus $\operatorname{CSpec}(S)$ is not Hausdorff when $\operatorname{CSpec}(S)$ contains at least two points.
\end{example}

\subsection{Quasi-compactness}
    \label{subsection3.2}
    
\begin{proposition}
    \label{prop1}
For a semiring $S$, if $S$ satisfies condition $(\ast)$ below, then the topological space $\operatorname{CSpec}(S)$ is quasi-compact.

$(\ast)$ Every proper congruence on $S$ is contained in some prime congruence on $S$.
\end{proposition}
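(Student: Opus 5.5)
We argue exactly as for the quasi-compactness of $\operatorname{Spec}(A)$ of a ring, with arbitrary congruences in place of ideals and condition $(\ast)$ in place of the existence of maximal ideals. Let $\{U_\lambda\}_{\lambda \in \Lambda}$ be an open cover of $\operatorname{CSpec}(S)$ and write $U_\lambda = \operatorname{CSpec}(S) \setminus V(C_\lambda)$ for congruences $C_\lambda$ on $S$. The cover condition says $\bigcap_{\lambda} V(C_\lambda) = \emptyset$.

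The first step is to check that $\bigcap_{\lambda} V(C_\lambda) = V(\langle \bigcup_\lambda C_\lambda \rangle_S)$. This holds because a congruence $P$ contains every $C_\lambda$ if and only if it contains the smallest congruence generated by their union; this is the same fact used in \cite[Proposition 2.4]{Tanaka}. Hence $C := \langle \bigcup_\lambda C_\lambda \rangle_S$ is contained in no prime congruence. By $(\ast)$, read contrapositively, $C$ is improper, that is, $C = S^2$.

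The second step is a finiteness argument. Since $S^2 = C$, in particular $(0_S, 1_S) \in C$. Next we describe $\langle T \rangle_S$ for $T = \bigcup_\lambda C_\lambda$ as the union of $\langle T_0 \rangle_S$ over finite subsets $T_0 \subset T$. This union is a congruence: any two pairs in the union lie in a common $\langle T_0 \rangle_S$, because finite unions of finite sets are finite, so the union is closed under the compatibility and transitivity axioms. It also contains $T$, so it equals $\langle T \rangle_S$. Alternatively, $\langle T\rangle_S$ is built by finitely many applications of the congruence axioms, and each derivation uses only finitely many generators.

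Hence $(0_S, 1_S) \in \langle T_0 \rangle_S$ for some finite $T_0$, and $T_0$ involves only finitely many indices $\lambda_1, \ldots, \lambda_k$. Thus $(0_S,1_S) \in \langle C_{\lambda_1} \cup \cdots \cup C_{\lambda_k} \rangle_S =: C'$. Then $C'$ is improper: for any $(a,b) \in S^2$, multiplying $(0_S,1_S)$ by $(a,a)$ and $(b,b)$ gives $(0_S, a)$ and $(0_S, b)$ in $C'$, so $(a,b) \in C'$ by symmetry and transitivity. Since prime congruences are proper by definition, $V(C') = \bigcap_{i=1}^k V(C_{\lambda_i}) = \emptyset$, and $U_{\lambda_1}, \ldots, U_{\lambda_k}$ cover $\operatorname{CSpec}(S)$.

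The step needing the most care is the finite-generation claim for $\langle T \rangle_S$, namely that the union over finite subsets is indeed a congruence. It is routine but should be stated explicitly. Condition $(\ast)$ is used only to pass from "contained in no prime" to "improper".
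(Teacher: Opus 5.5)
Your proof is correct and follows essentially the same route as the paper: form the congruence generated by $\bigcup C_\lambda$, use $(\ast)$ to force it to be improper, extract finitely many indices whose generated congruence contains $(0_S,1_S)$, and conclude that those finitely many sets already suffice. The only differences are presentational. The paper works with closed sets having the finite intersection property rather than open covers. It also cites \cite[Lemma~2.1]{Nakajima=Song} for the finiteness step and \cite[Lemma~3.3]{JuAe6} for ``$(1_S,0_S)$ generates $S^2$'', whereas you prove both of these directly.
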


\begin{proof}
If $\operatorname{CSpec}(S)$ is empty, then it is quasi-compact.

Assume that $\operatorname{CSpec}(S)$ is nonempty.
Let $\{F_i\}_{i \in I}$ be a family of closed subsets of $\operatorname{CSpec}(S)$ with the finite intersection property. 
For each $i \in I$, let $C_i$ be a congruence on $S$ such that $F_i = V(C_i)$. 
Let $C := \left\langle \bigcup_{i \in I} C_{i} \right\rangle_S$.

If $C = S^2$, then by \cite[Lemma~2.1]{Nakajima=Song}, there exist finitely many indices $i_1, \dots, i_m \in I$ such that $\left\langle \bigcup_{j=1}^m C_{i_j} \right\rangle_S \ni (1_S, 0_S)$.
Hence $\left\langle \bigcup_{j=1}^m C_{i_j} \right\rangle_S = S^2$ by \cite[Lemma~3.3]{JuAe6}.
However, this makes a contradiction such that $\varnothing = V (S^2) = V\left( \left\langle \bigcup_{j=1}^m C_{i_j} \right\rangle_S \right) = \bigcap_{j = 1}^m V(C_{i_j}) = \bigcap_{j = 1}^m F_{i_j} \not= \varnothing$, which follows from the finite intersection property.
Therefore, $C \not= S^2$, meaning that $C$ is a proper congruence.

By $(\ast)$, there exists a prime congruence $P \in \operatorname{CSpec}(S)$ such that $C_{i} \subset C \subset P$ holds for each $i \in I$, and so $P \in V(C_{i}) = F_i$.
Consequently, $P \in \bigcap_{i \in I} F_i$, which implies that $\operatorname{CSpec}(S)$ is quasi-compact.
\end{proof}

By Example~\ref{ex1}, Proposition~\ref{prop1} and \cite[Proposition~2.8]{Tanaka}, we have the following:

\begin{corollary}
    \label{cor1}
For a $\boldsymbol{B}$-algebra $S$, the topological space $\operatorname{CSpec}(S)$ is quasi-compact.
\end{corollary}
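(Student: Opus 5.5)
The plan is to reduce Corollary~\ref{cor1} to Proposition~\ref{prop1} by checking that every $\boldsymbol{B}$-algebra $S$ satisfies condition $(\ast)$. There are two cases, according to whether $S$ has one element or at least two.

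\emph{Trivial case.} Suppose $S$ consists of a single element, so $0_S = 1_S$. Then $S^2$ is the only congruence on $S$, and it is improper. Hence there are no proper congruences, and $(\ast)$ holds vacuously. (Equivalently, $\operatorname{CSpec}(S)=\varnothing$, which is quasi-compact; the first line of the proof of Proposition~\ref{prop1} already covers this.)

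\emph{Main case.} Suppose $S$ has at least two elements, and let $C$ be a proper congruence on $S$. The quotient $S/C$ is again additively idempotent, hence a $\boldsymbol{B}$-algebra. Since $C$ is proper, I want $S/C$ to have at least two elements. This is where \cite[Lemma~3.3]{JuAe6} (as used in the proof of Proposition~\ref{prop1}) enters: if $(1_S,0_S)\in C$ then $C=S^2$, so properness forces $1_S \neq 0_S$ in $S/C$.

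As recalled in Example~\ref{ex1}, the map $S/C \twoheadrightarrow \boldsymbol{B}$ sending nonzero elements to $0$ and $0_{S/C}$ to $-\infty$ has a prime kernel congruence by \cite[Proposition~2.10(ii)]{Joo=Mincheva}. Example~\ref{ex1} also cites \cite[Proposition~2.13]{Joo=Mincheva2} for the stronger statement that $C$ is contained in the kernel $P$ of the composite $S \to S/C \to \boldsymbol{B}$. Concretely, $P$ is the preimage of a prime congruence under a semiring homomorphism, so it is prime by the remark in Subsection~\ref{subsection2.3}. Moreover $P \supset \operatorname{Ker}(\pi_C) = C$. This establishes $(\ast)$. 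Alternatively, one could simply invoke \cite[Proposition~2.8]{Tanaka}, which presumably states that proper congruences on $\boldsymbol{B}$-algebras lie in prime ones.

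The only delicate point is checking that the composite map to $\boldsymbol{B}$ really is a semiring homomorphism. That is, in $S/C$, if $a+b=0$ then $a=b=0$, and if $ab=0$ then $a=0$ or $b=0$. Neither holds for a general $\boldsymbol{B}$-algebra, and this is exactly why the cited results \cite[Proposition~2.13]{Joo=Mincheva2} and \cite[Proposition~2.8]{Tanaka} are needed rather than the naive map.

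If those references are unavailable, I would instead argue by Zorn's lemma. Proper congruences containing $C$ are closed under unions of chains, because properness is detected by the single pair $(1_S,0_S)$. So a maximal proper congruence $M \supset C$ exists. I would then show that $S/M$ is $\boldsymbol{B}$, since it is a congruence-simple $\boldsymbol{B}$-algebra, and hence that $M$ is prime. Either way, $(\ast)$ holds, and Proposition~\ref{prop1} then gives quasi-compactness.
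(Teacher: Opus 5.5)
Your reduction is the paper's own: the corollary follows by checking $(\ast)$ for $\boldsymbol{B}$-algebras, via Example~\ref{ex1} and \cite[Proposition~2.8]{Tanaka}, and then applying Proposition~\ref{prop1}. To the extent you rely on those citations, your argument coincides with the paper's.

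You are right that the map $s\neq 0_S\mapsto 0$ of Example~\ref{ex1} is multiplicative only when there are no zero divisors, and this caveat applies to the argument recorded in Example~\ref{ex1} as well. It follows that your ``concrete'' paragraph cannot be used as written. For general $C$ the composite $S\to S/C\to\boldsymbol{B}$ is not a homomorphism, so the existence of a prime $P\supset C$ rests entirely on the cited result. Your further claim that additivity can also fail is wrong: in any $\boldsymbol{B}$-algebra, $a+b=0_S$ gives $a=a+(a+b)=a+b=0_S$.

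Your Zorn fallback is the right way to make the proof self-contained, but its only substantive step, that $S/M$ is $\boldsymbol{B}$, is asserted rather than proved. It can be closed as follows.
\begin{itemize}
\item Put $T=S/M$. Congruences on $T$ correspond to congruences on $S$ containing $M$, so by maximality of $M$ the only congruences on $T$ are the diagonal and $T^2$.
\item Let $a\neq 0_T$. The relation $\{(x,y)\in T^2\mid a\cdot x=a\cdot y\}$ is a congruence (this uses commutativity). It does not contain $(1_T,0_T)$, since otherwise $a=a\cdot 1_T=a\cdot 0_T=0_T$. Hence it is the diagonal, and $T$ has no zero divisors.
\item Together with the additivity remark above, the map $\chi:T\to\boldsymbol{B}$ sending every nonzero element to $0$ and $0_T$ to $-\infty$ is a semiring homomorphism. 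Here $\chi(1_T)=0$ because $M$ is proper.
\item The diagonal of $\boldsymbol{B}$ is prime. Indeed, if $a\neq b$, say $a=0$ and $b=-\infty$, then $(a\odot c)\oplus(b\odot d)=(a\odot d)\oplus(b\odot c)$ reads $c=d$.
\item Therefore the kernel of $S\to T\xrightarrow{\chi}\boldsymbol{B}$ is a prime congruence containing $M\supset C$.
\end{itemize}
This establishes $(\ast)$ without appealing to Example~\ref{ex1} or to \cite{Tanaka}.
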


\begin{corollary}
    \label{cor2}
Let $V$ be a subset of $\boldsymbol{R}^n$ and $K = \boldsymbol{T}[\boldsymbol{X}^{\pm}]_n, \overline{\boldsymbol{T}[\boldsymbol{X}^{\pm}]}_n, \overline{\boldsymbol{T}(\boldsymbol{X})}_n$.
Then $V(\boldsymbol{E}(V))$ is a quasi-compact subset of $\operatorname{CSpec}(K)$ and $\operatorname{CSpec}(K / \boldsymbol{E}(V))$ is quasi-compact.
\end{corollary}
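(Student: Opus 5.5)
Both claims reduce to Corollary~\ref{cor1}, once we note that the relevant semirings are $\boldsymbol{B}$-algebras and that quotients identify $\operatorname{CSpec}(K/\boldsymbol{E}(V))$ with the closed set $V(\boldsymbol{E}(V))$.

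\emph{$K$ and $K/\boldsymbol{E}(V)$ are $\boldsymbol{B}$-algebras.} Each of $\boldsymbol{T}[\boldsymbol{X}^{\pm}]_n$, $\overline{\boldsymbol{T}[\boldsymbol{X}^{\pm}]}_n$ and $\overline{\boldsymbol{T}(\boldsymbol{X})}_n$ is additively idempotent. The first is idempotent because $\max$ is. The second is a quotient of the first. The third is the semifield of fractions of a cancellative idempotent semiring, and $a/b + a/b = (a+a)/b = a/b$. Hence $K$ is a $\boldsymbol{B}$-algebra. Idempotency passes to quotients, so $K/\boldsymbol{E}(V)$ is a $\boldsymbol{B}$-algebra as well. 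Corollary~\ref{cor1} then gives directly that $\operatorname{CSpec}(K/\boldsymbol{E}(V))$ is quasi-compact. This holds even in the degenerate case $V=\varnothing$, where $\boldsymbol{E}(V)=K^2$ and the quotient is the zero semiring; there $\operatorname{CSpec}$ is empty, hence trivially quasi-compact (the proof of Proposition~\ref{prop1} already handles emptiness).

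\emph{$V(\boldsymbol{E}(V))$ is quasi-compact.} I would establish a homeomorphism
\[
\operatorname{CSpec}(K/\boldsymbol{E}(V)) \cong V(\boldsymbol{E}(V)) \subset \operatorname{CSpec}(K),
\]
given by $P \mapsto \pi^{-1}(P)$, where $\pi=\pi_{\boldsymbol{E}(V)}\colon K \twoheadrightarrow K/\boldsymbol{E}(V)$. The inverse sends $Q \mapsto Q/\boldsymbol{E}(V)$. The steps are as follows.
\begin{itemize}
\item Preimages of prime congruences are prime, as recalled in Subsection~\ref{subsection2.3}, and $\pi^{-1}(P)$ contains $\operatorname{Ker}(\pi)=\boldsymbol{E}(V)$. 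So the map lands in $V(\boldsymbol{E}(V))$.
\item Conversely, let $Q \supset \boldsymbol{E}(V)$ be prime. Then $Q/\boldsymbol{E}(V)$ is a congruence, and $\pi^{-1}(Q/\boldsymbol{E}(V))=Q$ precisely because $Q$ contains the kernel. Primality of $Q/\boldsymbol{E}(V)$ follows by lifting: a relation $(\bar a\bar c+\bar b\bar d,\bar a\bar d+\bar b\bar c)\in Q/\boldsymbol{E}(V)$ lifts to $(ac+bd,ad+bc)\in Q$, by surjectivity of $\pi$ and the previous identity. Properness of $Q/\boldsymbol{E}(V)$ follows from properness of $Q$.
\item The two maps are mutually inverse by surjectivity of $\pi$.
\item The bijection matches closed sets. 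For a congruence $C$ on $K/\boldsymbol{E}(V)$, $V(C)$ corresponds to $V(\pi^{-1}(C))\cap V(\boldsymbol{E}(V))$. Conversely, for $D$ on $K$, the set $V(D)\cap V(\boldsymbol{E}(V)) = V(\langle D\cup \boldsymbol{E}(V)\rangle_K)$ corresponds to $V$ of the image congruence $\langle D\cup\boldsymbol{E}(V)\rangle_K/\boldsymbol{E}(V)$.
\end{itemize}
Quasi-compactness of $V(\boldsymbol{E}(V))$ in the subspace topology then follows from that of $\operatorname{CSpec}(K/\boldsymbol{E}(V))$.

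\emph{Alternative.} One could avoid the homeomorphism: a closed subset of a quasi-compact space is quasi-compact, and $\operatorname{CSpec}(K)$ is quasi-compact by Corollary~\ref{cor1}. This is the quicker route, and I would present it first for $V(\boldsymbol{E}(V))$. The homeomorphism is still worth recording because it is reused later.

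The only mildly delicate point is the correspondence of closed sets, specifically checking that $V(D)\cap V(\boldsymbol{E}(V))$ is the $V$ of a congruence containing $\boldsymbol{E}(V)$. This follows from the closed-set axioms of \cite[Proposition 2.4]{Tanaka}.
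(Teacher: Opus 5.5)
Your proposal is correct and takes the same route the paper has in mind: the paper gives no separate proof and treats the statement as an immediate consequence of Corollary~\ref{cor1}, since $K$ and $K/\boldsymbol{E}(V)$ are $\boldsymbol{B}$-algebras and $V(\boldsymbol{E}(V))$ is a closed subset of the quasi-compact space $\operatorname{CSpec}(K)$ (equivalently, it is homeomorphic to $\operatorname{CSpec}(K/\boldsymbol{E}(V))$ via the quotient correspondence of \cite[Proposition~2.13]{Joo=Mincheva2} already used in Remark~\ref{rem1}). Your by-hand check of that homeomorphism and your treatment of the degenerate case $V=\varnothing$ are both sound.
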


\subsection{Structure sheaf}
    \label{subsection3.3}

We construct the structure sheaf on the congruence spectrum and study its global sections.

\begin{definition}[Structure Sheaf]
    \label{dfn1}
Let $S$ be a semiring.
For an open subset $U \subset \operatorname{CSpec}(S)$, let 
\begin{align*}
&\mathcal{O}_{\operatorname{CSpec}(S)}(U) \\
:=& \left\{ s : U \to \coprod_{P \in U} S/P \,\middle|\, \begin{array}{l} \forall P \in U, \, s(P) \in S/P \text{ and } \\ s \text{ is locally represented by an element of } S \text{ at } P \end{array} \right\}.  
\end{align*}
Here, $s$ is \textit{locally represented} by $f \in S$ at $P$ if there exists a neighborhood $V \subset U$ of $P$ such that for any $Q \in V$, $s(Q) = [f] \in S / Q$.
We call an element of $\mathcal{O}_{\operatorname{CSpec}(S)}(U)$ a \textit{section}.
In particular, we write $\mathcal{O}_{\operatorname{CSpec}(S)}(\operatorname{CSpec}(S))$ as $\Gamma(\operatorname{CSpec}(S), \mathcal{O}_{\operatorname{CSpec}(S)})$ and call its elements \textit{global sections}.
It is straightforward to verify that the assignment $U \mapsto \mathcal{O}_{\operatorname{CSpec}(S)}(U)$ with the natural restriction maps forms a sheaf of semirings on $\operatorname{CSpec}(S)$. 
We call it the \textit{structure sheaf} on $\operatorname{CSpec}(S)$.

We call the following semiring homomorphism the \textit{canonical map}:
\begin{align*}
    \eta_S : S \to \Gamma(\operatorname{CSpec}(S), \mathcal{O}_{\operatorname{CSpec}(S)}); f \mapsto s_f,
\end{align*}
where $s_f(P) := [f] \in S/P$ for any $P \in \operatorname{CSpec}(S)$.
\end{definition}

\begin{remark}
    \label{rem1}
Let $\pi : \boldsymbol{T}[\boldsymbol{X}^{\pm}]_n \twoheadrightarrow \overline{\boldsymbol{T}[\boldsymbol{X}^{\pm}]}_n = \boldsymbol{T}[\boldsymbol{X}^{\pm}]_n / \boldsymbol{E}(\boldsymbol{R}^n)$ be the natural surjective $\boldsymbol{T}$-algebra homomorphism.
Then, $\operatorname{Ker}(\pi) = \boldsymbol{E}(\boldsymbol{R}^n)$.
Moreover, each prime congruence on $\boldsymbol{T}[\boldsymbol{X}^{\pm}]_n$ contains $\boldsymbol{E}(\boldsymbol{R}^n)$.
In fact, by \cite[Theorem~5.4(i)]{Joo=Mincheva}, it is enough to see that $\boldsymbol{E}(\boldsymbol{R}^n) \subset \bigcap_{U : \text{a t-admissible matrix such that} \operatorname{dim}\boldsymbol{T}[\boldsymbol{X}^{\pm}]_n / P(U) \le 1}P(U)$, and this follows immediately from the definition of $P(U)$ for such $U$.
By \cite[Proposition~2.13]{Joo=Mincheva2}, $\pi$ induces a containment-preserving homeomorphism between $\operatorname{CSpec}(\boldsymbol{T}[\boldsymbol{X}^{\pm}]_n)$ and $\operatorname{CSpec}(\overline{\boldsymbol{T}[\boldsymbol{X}^{\pm}]}_n)$.
Let $\iota: \overline{\boldsymbol{T}[\boldsymbol{X}^{\pm}]}_n \hookrightarrow \overline{\boldsymbol{T}(\boldsymbol{X})}_n; f \mapsto f \odot 0^{\odot (-1)} = f$ be the natural injective $\boldsymbol{T}$-algebra homomorphism.
By \cite[Proposition~3.9(iii)]{Joo=Mincheva2}, $\iota$ induces a containment-preserving bijection between $\operatorname{CSpec}(\overline{\boldsymbol{T}[\boldsymbol{X}^{\pm}]}_n)$ and $\operatorname{CSpec}(\overline{\boldsymbol{T}(\boldsymbol{X})}_n)$.
This bijection preserves the Zariski closed sets by \cite[Proposition~3.9(i)--(iii)]{Joo=Mincheva2}, and hence is a homeomorphism.
Consequently, all the topological spaces 
\begin{align*}
\operatorname{CSpec}(\boldsymbol{T}[\boldsymbol{X}^{\pm}]_n), \quad \operatorname{CSpec}(\overline{\boldsymbol{T}[\boldsymbol{X}^{\pm}]}_n), \quad \text{and} \quad \operatorname{CSpec}(\overline{\boldsymbol{T}(\boldsymbol{X})}_n)
\end{align*}
are homeomorphic to each other via the maps induced by $\pi$, $\iota$, and their composition $\iota\circ\pi$.
By abuse of notation, we use the same symbol $P(U)$ for the corresponding prime congruence on $\overline{\boldsymbol{T}[\boldsymbol{X}^{\pm}]}_n$ and on $\overline{\boldsymbol{T}(\boldsymbol{X})}_n$.
\end{remark}

For $V \subset \boldsymbol{R}^n$, let $K$ be $\boldsymbol{T}[\boldsymbol{X}^{\pm}]_n / \boldsymbol{E}(V), \overline{\boldsymbol{T}[\boldsymbol{X}^{\pm}]}_n / \boldsymbol{E}(V)$ or $\overline{\boldsymbol{T}(\boldsymbol{X})}_n / \boldsymbol{E}(V)$.

\begin{remark}
    \label{rem2}
The closure $\overline{V}$ of $V$ can be regarded as a subset of $\operatorname{CSpec}(K)$.
More precisely, the map $\varphi : \overline{V} \to \operatorname{CSpec}(K); x \mapsto \boldsymbol{E}(\{x\}) / \boldsymbol{E}(V)$ is a homeomorphism onto its image.
In fact, by definition, $\boldsymbol{E}(\{x\}) / \boldsymbol{E}(V)$ is a prime congruence on $K$ and $\varphi$ is injective.
Also, let $U$ be an open subset of $\operatorname{CSpec}(K)$.
Then there exists a congruence $C$ on $K$ such that $U = \operatorname{CSpec}(K) \setminus V(C)$.
The inverse image $\varphi^{-1}(U)$ is $\overline{V} \setminus \boldsymbol{V}(C)$.
Conversely, if $O$ is an open subset of $\overline{V}$, then $\varphi(O)$ is an open subset of $\varphi(\overline{V})$.
\end{remark}

\begin{theorem}[Global Sections]
    \label{thm1}
The canonical map $\eta_K : K \to \Gamma(\operatorname{CSpec}(K), \mathcal{O}_{\operatorname{CSpec}(K)})$ is a $\boldsymbol{T}$-algebra isomorphism.
\end{theorem}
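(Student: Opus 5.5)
The plan is to check injectivity and surjectivity of $\eta_K$ separately; the structural fact behind surjectivity is the one observed in Example~\ref{ex1}. First dispose of the degenerate case $V = \varnothing$. Then $\boldsymbol{E}(V) = K^2$ for the ambient semiring, so $K$ is the one-element semiring and $\operatorname{CSpec}(K)=\varnothing$. The global sections then consist of the unique empty function, so $\eta_K$ is trivially a bijection. From now on assume $V \neq \varnothing$. Then $K$ has at least two elements (for instance, the images of the constants $0$ and $-\infty$ differ), and $K$ is additively idempotent, hence a $\boldsymbol{B}$-algebra. That $\eta_K$ is a $\boldsymbol{T}$-algebra homomorphism is immediate, since each $K \to K/P$ is one and the operations on sections are defined pointwise.

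\emph{Injectivity.} Suppose $f,g \in K$ and $s_f = s_g$, i.e.\ $[f]=[g]$ in $K/P$ for every $P \in \operatorname{CSpec}(K)$. By Remark~\ref{rem2}, for each $x \in V \subset \overline{V}$ the congruence $\boldsymbol{E}(\{x\})/\boldsymbol{E}(V)$ is a prime congruence on $K$. Applying the hypothesis to it, and lifting $f,g$ to representatives in the ambient semiring, gives $f(x)=g(x)$. Since this holds for all $x \in V$, the pair of representatives lies in $\boldsymbol{E}(V)$, so $f=g$ in $K$.

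\emph{Surjectivity.} Let $P_0$ be the kernel congruence of the surjection $K \twoheadrightarrow \boldsymbol{B}$ sending nonzero elements to $0$ and $0_K$ to $-\infty$; it is prime by Example~\ref{ex1}. The key step is that the only open subset of $\operatorname{CSpec}(K)$ containing $P_0$ is the whole space. To see this, write such an open set as $\operatorname{CSpec}(K)\setminus V(C)$ with $P_0 \notin V(C)$, so $C \not\subset P_0$. By Example~\ref{ex1}, every proper congruence on $K$ is contained in $P_0$, so $C = K^2$ and $V(C)=\varnothing$. Now take a global section $s$. By definition $s$ is locally represented at $P_0$ by some $f \in K$ on a neighborhood of $P_0$. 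That neighborhood must be all of $\operatorname{CSpec}(K)$, so $s(Q)=[f]$ for every $Q$, i.e.\ $s = s_f = \eta_K(f)$. In particular, quasi-compactness (Corollary~\ref{cor2}) is not even needed.

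The main point to verify carefully is the claim from Example~\ref{ex1} that every proper congruence on a $\boldsymbol{B}$-algebra is contained in this particular $P_0$. This is where \cite[Proposition~2.13]{Joo=Mincheva2} enters: the composite $K \twoheadrightarrow K/C \twoheadrightarrow \boldsymbol{B}$ has kernel exactly $P_0$ and contains $C$. Once this is granted, the rest of the argument is formal.
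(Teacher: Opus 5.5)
Your proposal is correct and is essentially the paper's proof: the empty case, injectivity through the points $\boldsymbol{E}(\{x\})/\boldsymbol{E}(V)$ of Remark~\ref{rem2}, and surjectivity because the $\boldsymbol{B}$-point $P_0$ of Example~\ref{ex1} has the whole spectrum as its only open neighborhood. The step you flag holds for this particular $K$, since every nonzero element of $K$ is a unit or dominates a unit monomial, so no proper congruence can identify a nonzero element with $0_K$. It is not true for arbitrary $\boldsymbol{B}$-algebras, however: for $\boldsymbol{B}\times\boldsymbol{B}$ the map sending nonzero elements to $0$ is not even multiplicative.
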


\begin{proof}
If $V = \varnothing$, then $K = \{ -\infty \}$ and $\Gamma(\operatorname{CSpec}(\{-\infty\}), \mathcal{O}_{\operatorname{CSpec}(\{-\infty\})})$ is a semiring consisting of only one element.
Thus $\eta_{\{-\infty\}}$ is a $\boldsymbol{T}$-algebra isomorphism in this case.

Assume that $V \not= \varnothing$.
Then $K$ has at least two elements.

To show that $\eta_K$ is injective, suppose $s_f = s_g$ for $f, g \in K$. 
Then, $[f] = [g]$ in $K/P$ for all $P \in \operatorname{CSpec}(K)$. 
By Remark~\ref{rem2}, it follows that $f = g$ in $K$.
Thus, $\eta_K$ is injective.

To show that $\eta_K$ is surjective, let $s$ be an arbitrary global section.
By definition, there exists an open covering $\{U_i\}_{i \in I}$ of $\operatorname{CSpec}(K)$ such that on each $U_i$, $s$ is represented by some $f_i \in K$. 
By Example~\ref{ex1}, there exists $j \in I$ such that $U_j = \operatorname{CSpec}(K)$.
Hence $s = s_{f_j}$. 
Therefore, $\eta_K$ is surjective, completing the proof.
\end{proof}

\subsection{Functors and categorical equivalences}
    \label{subsection3.4}

In this subsection, we study the functorial and categorical structures arising from subsets of Euclidean spaces and their associated $\boldsymbol{T}$-algebras.

Let $V$ (resp.~$W$) be a subset of $\boldsymbol{R}^n$ (resp.~$\boldsymbol{R}^m$).
A map $\theta : W \to V$ is a \textit{rational function map} if there exist $f_1, \ldots, f_n \in \overline{\boldsymbol{T}(\boldsymbol{Y})}_m$ such that $\theta(y) = (f_1(y), \ldots, f_n(y))$ for any $y \in W$.
In particular, there exists a unique rational function map $\varnothing \to V$, and for $V \not= \varnothing$, there exists no rational function map $V \to \varnothing$.

We define the category $\mathsf{Geom}_{\mathrm{concrete}}$ as follows.

\textbf{(i) Objects.} An object is a subset $V \subset \boldsymbol{R}^n$, with the ambient space $\boldsymbol{R}^n$ regarded as part of the data.

\textbf{(ii) Morphisms.} A morphism $W \to V$ is a rational function map $W \to V$.

\textbf{(iii) Composition.} Morphisms are composed by the usual composition of maps.

The identity map is a rational function map, and the composition of two rational function maps is again a rational function map.
Hence $\mathsf{Geom}_{\mathrm{concrete}}$ is a category.

Let $\mathsf{Geom}_{\mathrm{closed}}$ be the full subcategory of $\mathsf{Geom}_{\mathrm{concrete}}$ consisting of closed subsets of Euclidean spaces.

\smallskip

We define the category $\mathsf{Alg}_{\mathrm{presented}}$ as follows.

\textbf{(i) Objects.} An object is a presentation $\overline{\boldsymbol{T}(\boldsymbol{X})}_n / E$, where both $n$ and the congruence $E$ are part of the data.
In particular, two isomorphic $\boldsymbol{T}$-algebras may define distinct objects of $\mathsf{Alg}_{\mathrm{presented}}$ if they are given by different presentations.

\textbf{(ii) Morphisms.} A morphism from $\overline{\boldsymbol{T}(\boldsymbol{X})}_n / E \to \overline{\boldsymbol{T}(\boldsymbol{Y})}_m / F$ is a $\boldsymbol{T}$-algebra homomorphism $\overline{\boldsymbol{T}(\boldsymbol{X})}_n / E \to \overline{\boldsymbol{T}(\boldsymbol{Y})}_m / F$.

\textbf{(iii) Composition.} Morphisms are composed by the usual composition of maps.

Let $\mathsf{Alg}_{\boldsymbol{E}(V)\text{-type}}$ denote the full subcategory of $\mathsf{Alg}_{\mathrm{presented}}$ consisting of objects of the form
\begin{align*}
K_V = \overline{\boldsymbol{T}(\boldsymbol{X})}_n / \boldsymbol{E}(V).
\end{align*}

\smallskip

The results of \cite[Theorem 3.14, Lemma 3.18, Corollary 3.19, and Proposition 3.20]{JuAe5} can be summarized categorically as follows:
they give a contravariant equivalence
\begin{align*}
\mathsf{Geom}_{\mathrm{closed}}^{\mathrm{op}} \simeq \mathsf{Alg}_{\boldsymbol{E}(V)\text{-type}},
\end{align*}
where the functor in one direction is given on objects and on morphisms by
\begin{align*}
V \mapsto K_V \quad \text{and} \quad \theta \mapsto ( [f] \mapsto [f \circ \theta] ),
\end{align*}
while the functor in the other direction is given on objects and on morphisms by
\begin{align*}
\overline{\boldsymbol{T}(\boldsymbol{X})}_n / E \mapsto \boldsymbol{V}(E) \quad \text{and} \quad \psi \mapsto \theta_{\psi},
\end{align*}
where $\theta_{\psi}$ is the rational function map induced by the images of the coordinate functions.

\smallskip

Let $\mathsf{Alg}_{\mathrm{abstract}}$ be the category whose objects are $\boldsymbol{T}$-algebras $A$ for which there exist $n \ge 1$ and a surjective $\boldsymbol{T}$-algebra homomorphism $\overline{\boldsymbol{T}(\boldsymbol{X})}_n \twoheadrightarrow A$, and whose morphisms are $\boldsymbol{T}$-algebra homomorphisms.
Unlike $\mathsf{Alg}_{\mathrm{presented}}$, the category $\mathsf{Alg}_{\mathrm{abstract}}$ does not retain a choice of presentation as part of the data.

There is a natural forgetful functor
\begin{align*}
\mathcal{F} : \mathsf{Alg}_{\mathrm{presented}} \to \mathsf{Alg}_{\mathrm{abstract}},
\end{align*}
which sends a presented $\boldsymbol{T}$-algebra $\overline{\boldsymbol{T}(\boldsymbol{X})}_n/E$ to the underlying $\boldsymbol{T}$-algebra $\overline{\boldsymbol{T}(\boldsymbol{X})}_n/E$, forgetting the choice of its presentation.
On morphisms, $\mathcal{F}$ acts as the identity on the underlying $\boldsymbol{T}$-algebra homomorphisms.

Let $\mathsf{Alg}_{\mathrm{geom}}$ be the full subcategory $\mathcal{F}(\mathsf{Alg}_{\boldsymbol{E}(V)\text{-type}})$ of $\mathsf{Alg}_{\mathrm{abstract}}$.

\smallskip

We define the category $\mathsf{CSpec}$ as follows.

\textbf{(i) Objects.} An object is a pair $(\operatorname{CSpec}(S), \mathcal{O}_{\operatorname{CSpec}(S)} )$, where $S$ is a semiring.

\textbf{(ii) Morphisms.} A morphism $(\operatorname{CSpec}(R), \mathcal{O}_{\operatorname{CSpec}(R)}) \to (\operatorname{CSpec}(S), \mathcal{O}_{\operatorname{CSpec}(S)})$ is a pair $(f,f^{\#})$, where $f : \operatorname{CSpec}(R) \to \operatorname{CSpec}(S)$ is a continuous map and
\begin{align*}
f^{\#} : \mathcal{O}_{\operatorname{CSpec}(S)} \to f_{\ast}\mathcal{O}_{\operatorname{CSpec}(R)}
\end{align*}
is a morphism of sheaves of semirings.
Here, a \textit{morphism of sheaves of semirings} is understood to be a morphism of sheaves whose components are semiring homomorphisms.

\textbf{(iii) Composition.} Morphisms are composed in the usual way:
if
\begin{align*}
(f,f^\#): (X,\mathcal{O}_X) \to (Y, \mathcal{O}_Y), \qquad (g,g^\#): (Y,\mathcal{O}_Y) \to (Z,\mathcal{O}_Z),
\end{align*}
are morphisms, then their composition is
\begin{align*}
(g,g^\#)\circ(f,f^\#) = (g \circ f,\, (g_{\ast}f^{\#}) \circ g^{\#}),
\end{align*}
where $g_{\ast}f^{\#} :g_{\ast}\mathcal{O}_Y \to g_{\ast}f_{\ast}\mathcal{O}_X$ is the morphism obtained by applying the direct image functor $g_{\ast}$ to $f^{\#}$, and we identify $g_{\ast}f_{\ast} \mathcal{O}_X$ with $(g \circ f)_{\ast} \mathcal{O}_X$ via the natural identification.
The identity morphism is $\operatorname{id}_{(\operatorname{CSpec}(S), \mathcal{O}_{\operatorname{CSpec}(S)})} = ( \operatorname{id}_{\operatorname{CSpec}(S)}, \operatorname{id}_{\mathcal{O}_{\operatorname{CSpec}(S)}})$.

A morphism $(f, f^\#) : (\operatorname{CSpec}(R), \mathcal{O}_{\operatorname{CSpec}(R)}) \to (\operatorname{CSpec}(S), \mathcal{O}_{\operatorname{CSpec}(S)})$ in $\mathsf{CSpec}$ induces a semiring homomorphism on stalks $(f^{\#})_P : \mathcal{O}_{\operatorname{CSpec}(S),f(P)} \to \mathcal{O}_{\operatorname{CSpec}(R),P}$ for each $P \in \operatorname{CSpec}(R)$.
Moreover, there is a canonical semiring homomorphism $\operatorname{ev}_{f(P)} :\mathcal{O}_{\operatorname{CSpec}(S),f(P)} \to S / f(P); a_{f(P)} \mapsto [a]$ (resp.~$ \operatorname{ev}_P : \mathcal{O}_{\operatorname{CSpec}(R),P} \to R / P; b_P \mapsto [b]$).
Here, $a_{f(P)}$ (resp.~$b_P$) denotes the germ at $f(P)$ (resp.~$P$) of the section locally represented by $a \in S$ (resp.~$b \in R$).

\begin{definition}
    \label{dfn2}
The morphism $(f, f^\#)$ is \textit{congruence-reflecting at $P \in \operatorname{CSpec}(R)$} if there exists an injective semiring homomorphism $\overline{(f^{\#})_P} : S / f(P) \hookrightarrow R / P$ such that $\overline{(f^{\#})_P} \circ \operatorname{ev}_{f(P)} =\operatorname{ev}_P \circ (f^{\#})_P$.
The morphism $(f, f^\#)$ is \textit{congruence-reflecting} if it is congruence-reflecting at every $P \in \operatorname{CSpec}(R)$.
\end{definition}

The congruence-reflecting morphisms are closed under composition and contain the identity morphisms, as is readily verified from the definition.

We define $\mathsf{CSpec}_{\mathrm{abstract}}$ to be the subcategory of $\mathsf{CSpec}$ whose objects are
\begin{align*}
\left( \operatorname{CSpec}(A),\mathcal{O}_{\operatorname{CSpec}(A)} \right)
\end{align*}
for $A$ in $\mathsf{Alg}_{\mathrm{abstract}}$ and whose morphisms are the congruence-reflecting morphisms.

\begin{proposition}
    \label{prop2}
For $A, B \in \mathsf{Alg}_{\mathrm{abstract}}$, let $\psi : A \to B$ be a $\boldsymbol{T}$-algebra homomorphism.
Then the continuous map
\begin{align*}
f_\psi : \operatorname{CSpec}(B) \to \operatorname{CSpec}(A); P \mapsto \psi^{-1}(P),
\end{align*}
together with the morphism of sheaves of semirings
\begin{align*}
f^{\#}_\psi :
\mathcal{O}_{\operatorname{CSpec}(A)}
\to
(f_{\psi})_{\ast}\mathcal{O}_{\operatorname{CSpec}(B)},
\end{align*}
defines a morphism
\begin{align*}
\operatorname{CSpec}(\psi) = (f_{\psi}, f^{\#}_{\psi}):
\left(\operatorname{CSpec}(B),\mathcal{O}_{\operatorname{CSpec}(B)}\right)
\to
\left(\operatorname{CSpec}(A),\mathcal{O}_{\operatorname{CSpec}(A)}\right).
\end{align*}
Here, for an open subset $U\subset\operatorname{CSpec}(A)$ and a section $s \in \mathcal{O}_{\operatorname{CSpec}(A)}(U)$, if $s$ is locally represented by $a\in A$ at $P \in U$, then $f^{\#}_{\psi}(U)(s) \in \mathcal{O}_{\operatorname{CSpec}(B)}(f_{\psi}^{-1}(U))$ is locally represented by $\psi(a)\in B$ at every point of $f_{\psi}^{-1}(P)$.
Moreover, $\operatorname{CSpec}$ defines a contravariant functor $\mathsf{Alg}_{\mathrm{abstract}} \to \mathsf{CSpec}_{\mathrm{abstract}}$.
\end{proposition}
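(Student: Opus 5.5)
I will verify four things in order: that $f_\psi$ is well defined and continuous; that $f^{\#}_\psi$ is a well-defined morphism of sheaves of semirings; that the resulting morphism is congruence-reflecting, so that it lies in $\mathsf{CSpec}_{\mathrm{abstract}}$; and finally functoriality.

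\textbf{The map $f_\psi$.} Well-definedness follows from the fact recalled in Subsection~\ref{subsection2.3}: $\psi^{-1}(P)$ is a prime congruence on $A$ for every $P \in \operatorname{CSpec}(B)$. For continuity, let $C$ be a congruence on $A$. I claim that
\begin{align*}
f_\psi^{-1}(V(C)) = V\left(\langle (\psi\times\psi)(C)\rangle_B\right).
\end{align*}
Indeed, $C \subset \psi^{-1}(P)$ holds if and only if $(\psi\times\psi)(C) \subset P$, and this is equivalent to $\langle (\psi\times\psi)(C)\rangle_B \subset P$ because $P$ is a congruence. Hence preimages of closed sets are closed.

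\textbf{The sheaf map $f^{\#}_\psi$.} For $Q \in f_\psi^{-1}(U)$, set $P = \psi^{-1}(Q)$. The homomorphism $\psi$ induces an injective semiring homomorphism $\overline{\psi}_Q : A/P \hookrightarrow B/Q$, $[a] \mapsto [\psi(a)]$, which is well defined and injective precisely because $P = \psi^{-1}(Q)$. I then define
\begin{align*}
f^{\#}_\psi(U)(s)(Q) := \overline{\psi}_Q\bigl(s(f_\psi(Q))\bigr).
\end{align*}
This defining formula is independent of any choice of representative, so no separate compatibility check is needed. To show it is a section, suppose $s$ is represented by $a$ on a neighborhood $W \ni f_\psi(Q)$. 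Then on the open set $f_\psi^{-1}(W)$, which is open by continuity, the new section equals $[\psi(a)]$. So it is locally represented by $\psi(a)$, which is exactly the description in the statement. The following properties hold pointwise because each $\overline{\psi}_Q$ is a semiring homomorphism:
\begin{itemize}
\item compatibility with restrictions;
\item additivity and multiplicativity;
\item preservation of $0$ and $1$.
\end{itemize}

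\textbf{Congruence-reflecting property.} On stalks, $(f^{\#}_\psi)_Q$ sends the germ $a_{f_\psi(Q)}$ to $\psi(a)_Q$. Since every germ is represented by some element of the ring, we get
\begin{align*}
\operatorname{ev}_Q\circ(f^{\#}_\psi)_Q\bigl(a_{f_\psi(Q)}\bigr) = [\psi(a)] = \overline{\psi}_Q\bigl([a]\bigr) = \overline{\psi}_Q\circ\operatorname{ev}_{f_\psi(Q)}\bigl(a_{f_\psi(Q)}\bigr).
\end{align*}
Thus $\overline{\psi}_Q$ is the required injective homomorphism, and the morphism is congruence-reflecting at every point.

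\textbf{Functoriality.} For the identity, $\operatorname{id}^{-1}(P) = P$ and each $\overline{\operatorname{id}}_Q$ is the identity, so $\operatorname{CSpec}(\operatorname{id}_A)$ is the identity morphism. For $\psi : A\to B$ and $\chi : B \to C$, the point maps compose as
\begin{align*}
(\chi\circ\psi)^{-1}(Q) = \psi^{-1}(\chi^{-1}(Q)), \quad\text{that is,}\quad f_{\chi\circ\psi} = f_\psi\circ f_\chi.
\end{align*}
On sheaves, both $f^{\#}_{\chi\circ\psi}$ and $(f_\psi)_{\ast}f^{\#}_\chi\circ f^{\#}_\psi$ act pointwise by
\begin{align*}
\overline{\chi\circ\psi}_Q = \overline{\chi}_Q\circ\overline{\psi}_{\chi^{-1}(Q)}.
\end{align*}
Hence the two sheaf maps agree, and the assignment is a contravariant functor.

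The only step needing care is the sheaf map. Stating it by "the section represented by $a$ goes to the one represented by $\psi(a)$" raises a well-definedness issue, since different local representatives might a priori give different results. Defining it pointwise through $\overline{\psi}_Q$ avoids this, because the definition never refers to a representative. The other steps are routine.
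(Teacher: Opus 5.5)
Your proposal is correct and follows essentially the same route as the paper. Continuity comes from $f_\psi^{-1}(V(C)) = V(\langle \psi(C)\rangle_B)$, the congruence-reflecting property comes from the injective maps $A/\psi^{-1}(Q) \hookrightarrow B/Q$, and functoriality is checked on local representatives.

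The differences are only presentational. You define $f^{\#}_\psi$ pointwise through $\overline{\psi}_Q$, which makes the paper's separate check (that the image does not depend on the chosen local representative) automatic. You also verify injectivity of $\overline{\psi}_Q$ directly instead of citing Giansiracusa--Giansiracusa.
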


\begin{proof}
By definition, $f_{\psi}(P) = \psi^{-1}(P)$ belongs to $\operatorname{CSpec}(A)$ for every $P \in \operatorname{CSpec}(B)$.
Moreover, for any congruence $C$ on $A$,
\begin{align*}
f_{\psi}^{-1}(V(C))
&=
\{P\in\operatorname{CSpec}(B)\mid \psi^{-1}(P)\supset C\}\\
&=
\{P\in\operatorname{CSpec}(B)\mid P\supset\langle\psi(C)\rangle_B\}\\
&=
V(\langle\psi(C)\rangle_B),
\end{align*}
$f_{\psi}$ is continuous.

Let $s\in\mathcal{O}_{\operatorname{CSpec}(A)}(U)$, and suppose that $s$ is locally represented by $a\in A$ at $P\in U$.
Thus, there exists a neighborhood $V\subset U$ of $P$ such that $s(Q)=[a]\in A/Q$ for every $Q\in V$.
For every $Q^{\prime} \in f_\psi^{-1}(V)$, $f_{\psi}(Q')=\psi^{-1}(Q')\in V,$ and hence $f_{\psi}^{\#}(U)(s)(Q') = [\psi(a)]\in B/Q^{\prime}$.
Thus $f_{\psi}^{\#}(U)(s)$ is locally represented by $\psi(a)$ at every point of $f_{\psi}^{-1}(V)$, and in particular at every point of $f_{\psi}^{-1}(P)$.
Therefore $f_{\psi}^{\#}(U)(s) \in \mathcal{O}_{\operatorname{CSpec}(B)}(f_{\psi}^{-1}(U))$.

Assume that $s$ is locally represented by two elements $a, a^{\prime}$ of $A$ at $P \in U$.
Then there exists a neighborhood $V \subset U$ of $P$ such that for any $Q \in V, s(Q) = [a] = [a^{\prime}] \in A / Q$.
Therefore $(a,a^{\prime}) \in Q$ for every $Q \in V$.
For any $Q^{\prime} \in f_{\psi}^{-1}(V)$, since $f_{\psi}(Q^{\prime}) = \psi^{-1}(Q^{\prime}) \in V$, $(a, a^{\prime}) \in \psi^{-1}(Q^{\prime})$.
Thus $(\psi(a), \psi(a^{\prime})) \in Q^{\prime}$, which means that $[\psi(a)] = [\psi(a^{\prime})] \in B / Q^{\prime}$.

The maps $f_{\psi}^{\#}(U): \mathcal{O}_{\operatorname{CSpec}(A)}(U) \to \mathcal{O}_{\operatorname{CSpec}(B)}(f_{\psi}^{-1}(U))$ are semiring homomorphisms, since they are defined locally by the semiring homomorphism $\psi$.
They are compatible with restriction maps by construction.
Hence $f_{\psi}^{\#}$ is a morphism of sheaves of semirings.

For $P \in \operatorname{CSpec}(B)$, by \cite[Proposition 2.4.4]{Giansiracusa=Giansiracusa2}, the $\boldsymbol{T}$-algebra homomorphism $\overline{(f_{\psi}
^{\#})_P} : A / \psi^{-1}(P) \to B/P; [a] \mapsto [\psi(a)]$ is well-defined and injective.
Moreover, it satisfies $\overline{(f_{\psi}^{\#})_P} \circ \operatorname{ev}_{\psi^{-1}(P)} = \operatorname{ev}_{P} \circ (f_{\psi}^{\#})_P$.
Thus $\operatorname{CSpec}(\psi)=(f_{\psi}, f_{\psi}^{\#})$ is congruence-reflecting.

It remains to verify functoriality.
For the identity homomorphism $\operatorname{id}_A$ of $A$, $f_{\operatorname{id}_A}=\operatorname{id}_{\operatorname{CSpec}(A)}$ and for any open subset $U \subset \operatorname{CSpec}(A)$, $f_{\operatorname{id}_A}^{\#}(U) = \operatorname{id}_{\mathcal{O}_{\operatorname{CSpec}(A)}(U)}$.
Moreover, if $A \xrightarrow{\psi} B \xrightarrow{\phi} C$ are $\boldsymbol{T}$-algebra homomorphisms, then $f_{\phi\circ\psi} = f_{\psi}\circ f_{\phi}$, since $(\phi\circ\psi)^{-1}(P) = \psi^{-1}(\phi^{-1}(P))$ for every $P\in\operatorname{CSpec}(C)$.
Similarly, both $f_{\phi\circ\psi}^{\#}$ and $((f_{\psi})_{\ast} f_{\phi}^{\#}) \circ f_{\psi}^{\#}$ send a local representative $a\in A$ to the local representative $\phi(\psi(a))\in C$.
Hence $f_{\phi\circ\psi}^{\#} = ((f_{\psi})_{\ast} f_{\phi}^{\#}) \circ f_{\psi}^{\#}$.
Therefore $\operatorname{CSpec}$ is a contravariant functor.
\end{proof}

Let $\mathsf{CSpec}_{\mathrm{Geom}}$ be the full subcategory of $\mathsf{CSpec}_{\mathrm{abstract}}$ whose objects are $(\operatorname{CSpec}(A), \mathcal{O}_{\operatorname{CSpec}(A)})$ for $A$ in $\mathsf{Alg}_{\mathrm{Geom}}$.

\begin{theorem}[Categorical Equivalence]
    \label{thm2}
The contravariant functor $\operatorname{CSpec} : \mathsf{Alg}_{\mathrm{abstract}} \to \mathsf{CSpec}_{\mathrm{abstract}}$ induces an equivalence of categories
\begin{align*}
\mathsf{Alg}_{\mathrm{Geom}}^{\mathrm{op}}
\simeq
\mathsf{CSpec}_{\mathrm{Geom}}.
\end{align*}
\end{theorem}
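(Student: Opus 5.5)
We will show that the restricted functor $\operatorname{CSpec}:\mathsf{Alg}_{\mathrm{Geom}}^{\mathrm{op}}\to\mathsf{CSpec}_{\mathrm{Geom}}$ is essentially surjective, faithful and full. Essential surjectivity holds by definition of $\mathsf{CSpec}_{\mathrm{Geom}}$, since its objects are exactly the pairs $(\operatorname{CSpec}(A),\mathcal{O}_{\operatorname{CSpec}(A)})$ with $A\in\mathsf{Alg}_{\mathrm{Geom}}$. The work is to exhibit, for $A,B\in\mathsf{Alg}_{\mathrm{Geom}}$, a bijection between $\operatorname{Hom}(A,B)$ and the congruence-reflecting morphisms $\operatorname{CSpec}(B)\to\operatorname{CSpec}(A)$.

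Define an inverse assignment $\Gamma$ that takes global sections. Given $(f,f^\#)$, the component $f^\#(\operatorname{CSpec}(A))$ is a semiring homomorphism $\Gamma(\operatorname{CSpec}(A),\mathcal{O})\to\Gamma(\operatorname{CSpec}(B),\mathcal{O})$. Conjugating by the isomorphisms $\eta_A,\eta_B$ of Theorem~\ref{thm1} produces a semiring homomorphism $\psi_f:A\to B$. Theorem~\ref{thm1} applies because any $A\in\mathsf{Alg}_{\mathrm{Geom}}$ is isomorphic to some $\overline{\boldsymbol{T}(\boldsymbol{X})}_n/\boldsymbol{E}(V)$, and $\eta$ is natural with respect to isomorphisms. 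For faithfulness, note that if $\psi:A\to B$ is given, then by Proposition~\ref{prop2} the map $f_\psi^\#$ on global sections sends $s_a$ to $s_{\psi(a)}$. Hence $\eta_B^{-1}\circ f^\#_\psi(\operatorname{CSpec}(A))\circ\eta_A=\psi$, so $\psi$ is recovered and $\operatorname{CSpec}$ is injective on Hom-sets.

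For fullness, start from a congruence-reflecting $(f,f^\#)$, put $\psi=\psi_f$, and show that $(f,f^\#)=(f_\psi,f^\#_\psi)$. First we check that $\psi$ is a $\boldsymbol{T}$-algebra map. The $\boldsymbol{T}$-structure is the image of constants, and we must show $\psi(c)=c$ for $c\in\boldsymbol{T}$. The plan is to use congruence-reflection at the points $P=\boldsymbol{E}(\{y\})/\boldsymbol{E}(W)$ of Remark~\ref{rem2}, where $B/P\cong\boldsymbol{T}$ and $A/f(P)$ embeds injectively in it compatibly with the germ maps. Evaluating at every $y$ then determines $\psi(a)(y)$ as the image of $[a]$ under this embedding. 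Using that $\boldsymbol{E}(W)$-classes are determined by values on $W$, we need that the embedding fixes constants. This is the main obstacle: a bare semiring embedding $\boldsymbol{T}\hookrightarrow\boldsymbol{T}$ can be $c\mapsto\lambda c$ with $\lambda>0$. I would try to rule this out by comparing with $A/f(P)$ and requiring the two evaluation maps to agree on constant sections. If that fails, I would read the morphisms in $\mathsf{CSpec}_{\mathrm{Geom}}$ implicitly as $\boldsymbol{T}$-linear on sheaves, which matches the proof pattern of Proposition~\ref{prop2}.

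Next, $f=f_\psi$. For $P\in\operatorname{CSpec}(B)$, congruence-reflection gives an injective $\overline{(f^\#)_P}:A/f(P)\hookrightarrow B/P$ with $\overline{(f^\#)_P}([a])=[\psi(a)]$. The last identity holds because the germ of $f^\#(s_a)$ at $P$ is the germ of $s_{\psi(a)}$, by the definition of $\psi$ through global sections. Then $(a,a')\in f(P)$ iff $[\psi(a)]=[\psi(a')]$ in $B/P$ iff $(a,a')\in\psi^{-1}(P)$, using injectivity in one direction and well-definedness in the other. So $f(P)=\psi^{-1}(P)$.

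Finally, $f^\#=f^\#_\psi$ on every open $U$. Since every section is locally the restriction of some $s_a$, and both morphisms commute with restriction, it suffices to check global sections of the form $s_a$. There both send $s_a$ to $s_{\psi(a)}$, and the sheaf property (sections are determined locally) gives equality on $U$. Combining these steps, $\operatorname{CSpec}$ is fully faithful and essentially surjective, which yields the equivalence $\mathsf{Alg}_{\mathrm{Geom}}^{\mathrm{op}}\simeq\mathsf{CSpec}_{\mathrm{Geom}}$.
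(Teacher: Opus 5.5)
Your argument is the paper's proof. It has the same three steps: essential surjectivity by definition; $\psi=\eta_B^{-1}\circ f^{\#}(\operatorname{CSpec}(A))\circ\eta_A$ together with congruence-reflection to get $\overline{(f^{\#})_P}([a])=[\psi(a)]$, hence $f(P)=\psi^{-1}(P)$ and $f^{\#}=f^{\#}_\psi$; and faithfulness by recovering $\psi$ from global sections. Your derivation of $f^{\#}=f^{\#}_\psi$ is, if anything, more careful than the paper's stalk-level assertion. You use $f^{\#}(\operatorname{CSpec}(A))(s_a)=s_{\psi(a)}$, compatibility with restriction, and the sheaf axiom. The paper instead passes through stalks, and knowing $\operatorname{ev}_P\circ(f^{\#})_P$ alone does not pin down $(f^{\#})_P$.

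The one point to correct is how you handle $\boldsymbol{T}$-linearity. Your first plan cannot work, because under the literal definitions the functor is not full. Morphisms in $\mathsf{CSpec}$ are morphisms of sheaves of semirings, and Definition~\ref{dfn2} asks only for a semiring homomorphism. Take $A=\overline{\boldsymbol{T}(X)}_1/\boldsymbol{E}(\{0\})\cong\boldsymbol{T}$ and the semiring automorphism $\sigma$ of $A$ corresponding to $c\mapsto 2c$. The construction in Proposition~\ref{prop2} never uses $\boldsymbol{T}$-linearity, so $\operatorname{CSpec}(\sigma)$ is a congruence-reflecting endomorphism of $(\operatorname{CSpec}(A),\mathcal{O}_{\operatorname{CSpec}(A)})$. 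It differs from the identity, since on global sections it sends $s_{[c]}$ to $s_{[2c]}$. Yet the only $\boldsymbol{T}$-algebra endomorphism of $A$ is the identity.

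So your fallback is necessary, not optional: require $f^{\#}$ to be a morphism of sheaves of $\boldsymbol{T}$-algebras, with $\boldsymbol{T}$ acting through constant sections. Then $\psi$ is a $\boldsymbol{T}$-algebra homomorphism because $\eta_A$ and $\eta_B$ are. The paper makes the same assumption tacitly, when it says $\psi$ defines a $\boldsymbol{T}$-algebra homomorphism and calls $\overline{(f^{\#})_P}$ a $\boldsymbol{T}$-algebra homomorphism. With that convention stated explicitly, your proof is complete.
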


\begin{proof}
By definition, $\operatorname{CSpec} : \mathsf{Alg}_{\mathrm{Geom}} \to \mathsf{CSpec}_{\mathrm{Geom}}$ is essentially surjective. 

Let $(f,f^{\#}): (\operatorname{CSpec}(B), \mathcal{O}_{\operatorname{CSpec}(B)}) \to (\operatorname{CSpec}(A), \mathcal{O}_{\operatorname{CSpec}(A)})$ be a morphism in $\mathsf{CSpec}_{\mathrm{Geom}}$.
Then by Theorem~\ref{thm1}, the induced homomorphism on global sections $f^{\#}(\operatorname{CSpec}(A)) : \Gamma(\operatorname{CSpec}(A),\mathcal{O}_{\operatorname{CSpec}(A)}) \to \Gamma(\operatorname{CSpec}(B), \mathcal{O}_{\operatorname{CSpec}(B)})$ defines the $\boldsymbol{T}$-algebra homomorphism $\psi = \eta_B^{-1} \circ f^{\#}(\operatorname{CSpec}(A)) \circ \eta_A : A \to B$.
On the other hand, since $(f, f^{\#})$ is congruence-reflecting, for each $P \in \operatorname{CSpec}(B)$, there exists an injective $\boldsymbol{T}$-algebra homomorphism $\overline{(f^{\#})_P}:A/f(P) \hookrightarrow B/P$ such that $\overline{(f^{\#})_P} \circ \operatorname{ev}_{f(P)} =\operatorname{ev}_P \circ (f^{\#})_P$.
Hence, for $a \in A$, $\overline{(f^{\#})_P}([a]) = [\psi(a)] \in B / P$.
Thus $(f^{\#})_P = (f_{\psi}^{\#})_P$ for every $P \in \operatorname{CSpec}(B)$, and so $f^{\#} = f_{\psi}^{\#}$.
Moreover, by the injectivity of $\overline{(f^{\#})_P}$, for $a, a^{\prime} \in A$, $(a, a^{\prime}) \in f(P)$ if and only if $(\psi(a), \psi(a^{\prime})) \in P$.
This implies that $f(P) = \psi^{-1}(P)$, namely, $f = f_{\psi}$.
Thus, $\operatorname{CSpec}$ is full.

If $\psi_1, \psi_2 : A \to B$ are $\boldsymbol{T}$-algebra homomorphisms such that $\operatorname{CSpec}(\psi_1) = \operatorname{CSpec}(\psi_2)$, then $f_{\psi_1}^{\#}(\operatorname{CSpec}(A)) \circ \eta_A = f_{\psi_2}^{\#}(\operatorname{CSpec}(A)) \circ \eta_A$ implies $\eta_B \circ \psi_1 = \eta_B \circ \psi_2$.
Since $\eta_B$ is a $\boldsymbol{T}$-algebra isomorphism by Theorem~\ref{thm1}, $\psi_1 = \psi_2$.
Therefore, $\operatorname{CSpec}$ is faithful.
\end{proof}

\begin{remark}
    \label{rem3}
The equivalence established above is not specific to $\overline{\boldsymbol{T}(\boldsymbol{X})}_n/\boldsymbol{E}(V)$.
By Theorem~\ref{thm1}, the same construction applies to
\begin{align*}
\boldsymbol{T}[\boldsymbol{X}^{\pm}]_n/\boldsymbol{E}(V)
\quad\text{and}\quad
\overline{\boldsymbol{T}[\boldsymbol{X}^{\pm}]}_n/\boldsymbol{E}(V).
\end{align*}
Thus, for each of these three choices of $K$, the category of congruence spectra equipped with the corresponding structure sheaf is equivalent to the opposite of the corresponding category of $\boldsymbol{T}$-algebras.
\end{remark}

\section{The case of finite unions of $\boldsymbol{R}$-rational polyhedral sets}
    \label{section4}

In this section, we give a complete classification of the prime congruences containing $\boldsymbol{E}(V)$ when $V$ is a finite union of nonempty $\boldsymbol{R}$-rational polyhedral sets.
We first prepare the necessary notions concerning $\boldsymbol{R}$-rational polyhedral sets.
We then give a complete classification of these prime congruences.
This provides a geometric description of the prime congruences appearing in the spectrum associated with $V$, and will be applied in the next section to tropical curves with parallel rays.

\subsection{$\boldsymbol{R}$-rational polyhedral sets}
    \label{subsection4.1}

A \textit{polyhedral set} in $\boldsymbol{R}^n$ is the solution set of a system of a finite number of linear inequalities.
The polyhedral set $\{ \boldsymbol{x} \in \boldsymbol{R}^n \,|\, A \boldsymbol{x} \ge \boldsymbol{0} \}$ is called the \textit{recession cone} of the polyhedral set $P := \{ \boldsymbol{x} \in \boldsymbol{R}^n \,|\, A \boldsymbol{x} \ge \boldsymbol{b} \}$ defined by an $l \times n$-matrix $A$ and a vector $\boldsymbol{b} \in \boldsymbol{R}^l$ when $P$ is nonempty.
A \textit{finite polyhedral complex} $X$ is a complex consisting of a finite number of polyhedral sets.
Its \textit{support} $|X|$ is the union of its polyhedral sets.
The \textit{recession fan} of $X$ is the union of all recession cones of polyhedral sets in $X$.
By definition, we can say that it is also the \textit{recession fan} $\operatorname{rec}(|X|)$ of $|X|$.
A finite union of polyhedral sets in $\boldsymbol{R}^n$ is the support of a finite polyhedral complex in $\boldsymbol{R}^n$, and vice versa (cf.~\cite[Proposition~4.1.1(a)]{Mikhalkin=Rau}).

For a nonempty polyhedral set $P$ in $\boldsymbol{R}^n$, its \textit{dimension} $\operatorname{dim}P$ is defined by the dimension of the smallest affine subspace of $\boldsymbol{R}^n$ containing $P$.
If $P$ is empty, we define that it has dimension minus one.
The \textit{dimension} $\operatorname{dim}X$ of a finite polyhedral complex $X$ consisting of polyhedral sets $P_1, \ldots, P_m$ is defined as $\operatorname{max}\{ \operatorname{dim}P_i \,|\, i = 1, \ldots, m\}$.
We say that $|X|$ has dimension $\operatorname{dim}X$, i.e., $\operatorname{dim}(|X|) := \operatorname{dim}X$.
Since $X$ consists of only a finite number of polyhedral sets, $\operatorname{dim}(|X|)$ is independent of the choice of its \textit{polyhedral structures}, i.e., finite polyhedral complexes with $|X|$ as their supports.

A polyhedral set $P$ in $\boldsymbol{R}^n$ is \textit{$\boldsymbol{R}$-rational} if $P$ is of the form $\{ \boldsymbol{x} \in \boldsymbol{R}^n \,|\, A\boldsymbol{x} \ge \boldsymbol{b} \}$ with some $l \times n$-matrix $A$ with entries in $\boldsymbol{Q}$ and some vector $\boldsymbol{b} \in \boldsymbol{R}^l$.

For $x_1, \ldots, x_m \in \boldsymbol{R}^n$, let $\operatorname{Cone}\{x_1, \ldots, x_m \}$ denote the polyhedral cone $\left\{ \sum_{i = 1}^m t_ix_i \,\middle|\, t_i \ge 0 \right\}$ generated by them.
We set $\operatorname{Cone}(\varnothing) := \{ \boldsymbol{0}\}$.

For $A, B \subset \boldsymbol{R}^n$, let $A + B$ be the \textit{Minkowski sum} $\{ x + y \,|\, x \in A, y \in B \}$.

\subsection{Classification of prime congruences}
    \label{subsection4.2}

Let $V$ be a finite union of nonempty $\boldsymbol{R}$-rational polyhedral sets in $\boldsymbol{R}^n$.
Let $d$ be its dimension and $K$ one of $\boldsymbol{T}[\boldsymbol{X}^{\pm}]_n, \overline{\boldsymbol{T}[\boldsymbol{X}^{\pm}]}_n, \overline{\boldsymbol{T}(\boldsymbol{X})}_n$.

By Remark~\ref{rem1} and \cite[Corollaries~3.26 and 4.6]{Nakajima=Song}, the quotient $\boldsymbol{T}$-algebra $K / \boldsymbol{E}(V)$ has Krull dimension $d + 1$.

By \cite[Proposition~2.13 and Proposition~3.9(ii), (iii)]{Joo=Mincheva2}, for any t-admissible matrix $U$, $\boldsymbol{E}(V) \subset P(U)$ on $\boldsymbol{T}[\boldsymbol{X}^{\pm}]_n$ if and only if $\boldsymbol{E}(V) \subset P(U)$ on $\overline{\boldsymbol{T}[\boldsymbol{X}^{\pm}]}_n$ if and only if $\boldsymbol{E}(V) \subset P(U)$ on $\overline{\boldsymbol{T}(\boldsymbol{X})}_n$.

By \cite[Proposition~2.13]{Joo=Mincheva2} and \cite[Lemmas~3.11 and 3.12]{Nakajima=Song}, for a prime congruence $P$ on $K$, if $\boldsymbol{E}(V) \subset P$, then a t-admissible matrix $U$ such that $P = P(U)$ can be chosen to be one of the following forms:
\begin{align*}
\begin{pmatrix} 0 & \cdots & 0 \end{pmatrix}, \quad U(x_1, \ldots, x_l; k) := \begin{pmatrix} 0 & & x_1 \\ & \vdots & \\ 0 & & x_{k-1} \\ 1 && x_k \\ 0 & &x_{k + 1} \\ & \vdots & \\ 0 && x_l \end{pmatrix}, \quad Z(y_1, \ldots, y_m) := \begin{pmatrix} 0 & & y_1 \\ & \vdots & \\ 0 & & y_m \end{pmatrix},
\end{align*}
where $1 \le l, m \le d + 1, 1 \le k \le l$, $x_1, \ldots, x_l, y_1, \ldots, y_m \in \boldsymbol{R}^n$ and $x_1, \ldots, x_{k - 1}, x_{k + 1}, \ldots, x_l$ are linearly independent, as are $y_1, \ldots, y_m$.

By \cite[the proof of Theorem~1.1]{JuAe6}, there exists $f \in \overline{\boldsymbol{T}(\boldsymbol{X})}_n \setminus \{ -\infty \}$ such that $\boldsymbol{E}(V) = \langle (f, 0) \rangle_{\overline{\boldsymbol{T}(\boldsymbol{X})}_n}$.
Choose $f_1, f_2 \in \boldsymbol{T}[\boldsymbol{X}^{\pm}]_n \setminus \{ -\infty \}$ such that $f = \iota(\pi(f_1)) \odot \iota(\pi(f_2))^{\odot (-1)}$, where $\pi$ and $\iota$ are as in Remark~\ref{rem1}.
Clearly $(f_1, f_2) \in \boldsymbol{E}(V)$ on $\boldsymbol{T}[\boldsymbol{X}^{\pm}]_n$ and $V = \boldsymbol{V}(\boldsymbol{E}(V)) = \boldsymbol{V}(\{(f,0)\}) = \boldsymbol{V}(\{(f_1, f_2)\})$.

\textbf{Case $0$. $U = \begin{pmatrix} 0 & \cdots & 0 \end{pmatrix}$.}
Since $V \not= \varnothing$, for $x \in V$, $\boldsymbol{E}(V) \subset \boldsymbol{E}(\{x\}) = P(U(x;1)) \subset P(\begin{pmatrix} 0 & \cdots & 0 \end{pmatrix})$ hold.

\textbf{Case $1$. $U = Z(y_1, \ldots, y_m)$.}
By \cite[the proof of Proposition~3.22 and Lemma~3.26]{Nakajima=Song}, $\boldsymbol{E}(V) \subset P(U)$ if and only if $\varepsilon_2,\ldots,\varepsilon_m>0$ can be chosen successively, where $\varepsilon_i$ is chosen after $\varepsilon_2,\ldots,\varepsilon_{i-1}$, so that $\operatorname{rec}(V) \supset \operatorname{Cone}\left\{ y_1, \ldots, y_1 + \sum_{i = 2}^m \varepsilon_i y_i \right\}$.
In particular, $m \le d$.

\textbf{Case $2$. $U = U(x_1, \ldots, x_l;k)$.}

\textbf{Subcase $2$-$1$. $k = 1$.}
By \cite[the proofs of Propositions~3.18 and 3.19]{Nakajima=Song}, $\boldsymbol{E}(V) \subset P(U)$ if and only if $\varepsilon_2,\ldots,\varepsilon_l>0$ can be chosen successively, where $\varepsilon_i$ is chosen after $\varepsilon_2,\ldots,\varepsilon_{i-1}$, so that $V \supset \left\{ x_1 + \sum_{i = 2}^l \delta_i x_i \,\middle|\, 0 \le \delta_i \le \varepsilon_i \right\}$.

\textbf{Subcase $2$-$2$. $2 \le k \le l$.}
Since $k \ge 2$ and $P(U) \subset P(U(k-1))$ and $U(k-1) = Z(x_1, \ldots, x_{k-1})$, if $\boldsymbol{E}(V) \subset P(U)$, then, by Case~$1$, there exist $\varepsilon_2, \ldots, \varepsilon_{k - 1} > 0$ such that $\operatorname{rec}(V) \supset \operatorname{Cone}\left\{ x_1, \ldots, x_1 + \sum_{i = 2}^{k - 1} \varepsilon_i x_i \right\} =: L$.
By \cite[Proposition~2.10(i)]{Joo=Mincheva}, $f_1$ (resp.~$f_2$) has a term $m_{f_1}$ (resp.~$m_{f_2}$) such that $(f_1, m_{f_1}) \in P(U)$ (resp.~$(f_2, m_{f_2}) \in P(U)$).
By the definition of $P(U)$ and \cite[Remark~2.6]{Nakajima=Song}, there exist $z \in \{ x_k \} + L$ and $\varepsilon_{k + 1}, \ldots, \varepsilon_l > 0$ such that $f_1 = m_{f_1} = m_{f_2} = f_2$ on $\left\{ z + \sum_{i = k + 1}^l \delta_i x_i \,\middle|\, 0 \le \delta_i \le \varepsilon_i \right\} + L =: W$.
Hence $W \subset V$.
Note that for $i = k+1, \ldots, l$, $\varepsilon_i$ is chosen after $\varepsilon_2, \ldots, \varepsilon_{k - 1}, \varepsilon_{k + 1}, \ldots, \varepsilon_{i - 1}$.

Assume that there exist $\varepsilon_2, \ldots, \varepsilon_{k - 1}, \varepsilon_{k + 1}, \ldots, \varepsilon_l > 0$ and $z \in \{ x_k \} + \operatorname{Cone}\left\{ x_1, \ldots, x_1 + \sum_{i = 2}^{k - 1} \varepsilon_i x_i \right\}$ such that $V \supset \left\{ z + \sum_{i = k + 1}^l \delta_i x_i \,\middle|\, 0 \le \delta_i \le \varepsilon_i \right\} + \operatorname{Cone}\left\{ x_1, \ldots, x_1 + \sum_{i = 2}^{k-1} \varepsilon_i x_i \right\} =: W$.
Since $f_1$ and $f_2$ are piecewise linear, there exist $w \in W$ and $0 < \delta_i < \varepsilon_i$ for each $i = 2, \ldots, k - 1, k + 1, \ldots, l$ and terms $m_{f_1}$ of $f_1$ and $m_{f_2}$ of $f_2$, respectively, such that $f_1 = m_{f_1}$ and $f_2 = m_{f_2}$ on $\left\{ w + \sum_{i = k + 1}^l \delta_i^{\prime} x_i \,\middle|\, 0 \le \delta_i^{\prime} \le \delta_i \right\} + \operatorname{Cone}\left\{ x_1, \ldots, x_1 + \sum_{i = 2}^{k-1} \delta_i x_i \right\} =: W^{\prime}$ and $W^{\prime} \subset W$.
Then, since $W^{\prime} \subset W \subset V$, on $W^{\prime}$, $f_1$ and $f_2$ coincide.
Thus $(m_{f_1}, m_{f_2}) \in P(U)$, and hence $(f_1, f_2) \in P(U)$ by \cite[Remark~2.6]{Nakajima=Song}.
Consequently, $(f,0) \in P(U)$, which means that $\boldsymbol{E}(V) = \langle (f,0) \rangle_{\overline{\boldsymbol{T}(\boldsymbol{X})}_n} \subset P(U)$.

\smallskip

The preceding cases give a complete geometric criterion for the inclusion $\boldsymbol{E}(V) \subset P(U)$.
We summarize the result in the following theorem:

\begin{theorem}[Complete Classification]
    \label{thm3}
Let $V \subset \boldsymbol{R}^n$ be a finite union of nonempty $\boldsymbol{R}$-rational polyhedral sets in $\boldsymbol{R}^n$, and let $d = \operatorname{dim}(V)$.
For a prime congruence $P$ on one of $\boldsymbol{T}[\boldsymbol{X}^{\pm}]_n, \overline{\boldsymbol{T}[\boldsymbol{X}^{\pm}]}_n, \overline{\boldsymbol{T}(\boldsymbol{X})}_n$, the inclusion $\boldsymbol{E}(V) \subset P$ holds if and only if 
$P = P(U)$ for a $t$-admissible matrix $U$ of one of the following forms, with the corresponding conditions:

Case $0$: $U = \begin{pmatrix} 0 & \cdots &0 \end{pmatrix}$.

Case $1$: $U = Z(y_1, \ldots, y_m)$, where $m \le d$, $y_1, \ldots, y_m \in \boldsymbol{R}^n$ are linearly independent, and there exist $\varepsilon_2, \ldots, \varepsilon_m > 0$ such that 
\begin{align*}
\operatorname{Cone}\left\{ y_1, \ldots, y_1 + \sum_{i = 2}^m \varepsilon_i y_i \right\} \subset \operatorname{rec}(V).
\end{align*}

Case $2$: $U = U(x_1, \ldots, x_l;k)$, where $1 \le k \le l \le d + 1$, $x_1, \ldots, x_l \in \boldsymbol{R}^n$, $x_1, \ldots, x_{k-1}, x_{k+1}, \ldots, x_l$ are linearly independent, and there exist $\varepsilon_2, \ldots, \varepsilon_{k - 1}, \varepsilon_{k + 1}, \ldots, \varepsilon_l > 0$ and $z \in \{ x_k \} + \operatorname{Cone}\left\{ x_1, \ldots, x_1 + \sum_{i = 2}^{k - 1} \varepsilon_i x_i \right\}$ such that
\begin{align*}
\left\{ z + \sum_{i = k + 1}^l \delta_i x_i \,\middle|\, 0 \le \delta_i \le \varepsilon_i \right\} + \operatorname{Cone}\left\{ x_1, \ldots, x_1 + \sum_{i = 2}^{k - 1} \varepsilon_i x_i \right\} \subset V.
\end{align*}
\end{theorem}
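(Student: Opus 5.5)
The plan is to assemble Theorem~\ref{thm3} from the case analysis preceding it. By Remark~\ref{rem1} and the transfer statement via \cite[Proposition~2.13 and Proposition~3.9(ii), (iii)]{Joo=Mincheva2}, the three choices of $K$ are handled simultaneously. The inclusion $\boldsymbol{E}(V)\subset P(U)$ does not depend on whether $P(U)$ is regarded on $\boldsymbol{T}[\boldsymbol{X}^{\pm}]_n$, $\overline{\boldsymbol{T}[\boldsymbol{X}^{\pm}]}_n$ or $\overline{\boldsymbol{T}(\boldsymbol{X})}_n$. So I work on $\boldsymbol{T}[\boldsymbol{X}^{\pm}]_n$ and use the generator $(f,0)$, equivalently $(f_1,f_2)$, of $\boldsymbol{E}(V)$ given by \cite{JuAe6}.

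For the forward direction, let $P\supset\boldsymbol{E}(V)$ be prime. By \cite[Theorem~4.14(i)]{Joo=Mincheva} we have $P=P(U)$ for some t-admissible $U$. The normal-form reduction from \cite[Proposition~2.13]{Joo=Mincheva2} and \cite[Lemmas~3.11 and 3.12]{Nakajima=Song} lets us take $U$ to be the zero row, some $Z(y_1,\dots,y_m)$, or some $U(x_1,\dots,x_l;k)$, with the stated linear independence and with $l,m\le d+1$. The Krull dimension $d+1$ of $K/\boldsymbol{E}(V)$ gives this length bound, because the chain $P(U(i))$ of length $l$ lies above $\boldsymbol{E}(V)$. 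We then split into cases:
\begin{itemize}
\item Case~0 is automatic.
\item Case~1 follows from \cite[Proposition~3.22, Lemma~3.26]{Nakajima=Song}. The refinement $m\le d$ holds because the cone $\operatorname{Cone}\{y_1,\dots,y_1+\sum\varepsilon_iy_i\}$ is $m$-dimensional and lies in $\operatorname{rec}(V)$, whose dimension is at most $d$.
\item Subcase~2-1 is \cite[Propositions~3.18, 3.19]{Nakajima=Song}.
\item Subcase~2-2 uses $P(U)\subset P(U(k-1))=P(Z(x_1,\dots,x_{k-1}))$ to obtain the cone $L\subset\operatorname{rec}(V)$. We then choose terms $m_{f_1},m_{f_2}$ via \cite[Proposition~2.10(i)]{Joo=Mincheva}. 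Since $(f_1,f_2)\in P(U)$, the two dominant monomials are $P(U)$-equivalent. By \cite[Remark~2.6]{Nakajima=Song} this means they agree, and dominate, on a region $W=\{z+\sum\delta_ix_i\}+L$, so $W\subset\boldsymbol{V}(\{(f_1,f_2)\})=V$.
\end{itemize}

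For the converse, Cases~0, 1 and Subcase~2-1 are again the cited equivalences. For Subcase~2-2, given $W\subset V$, I use piecewise linearity of $f_1,f_2$: the finitely many linearity domains are rational polyhedra. A successive shrinking of the $\varepsilon_i$ in the lexicographic order dictated by $U$ then yields a sub-region $W'\subset W$ of the same shape on which $f_1=m_{f_1}$ and $f_2=m_{f_2}$. Because $W'\subset V$, we get $m_{f_1}=m_{f_2}$ on $W'$, hence $(m_{f_1},m_{f_2})\in P(U)$. Remark~2.6 of \cite{Nakajima=Song} then gives $(f_1,f_2)\in P(U)$, and therefore $\langle(f,0)\rangle=\boldsymbol{E}(V)\subset P(U)$.

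The main obstacle is this shrinking step together with the dictionary ``$(g,h)\in P(U)$ iff $g=h$ on a germ of the shape $\{z+\sum\delta_ix_i\}+L$''. The order of quantifiers matters: $\varepsilon_i$ is chosen after the earlier ones, mirroring lexicographic positivity of $U\binom{a}{\boldsymbol m}$. I would handle it by induction on the row index. At each step I would pick a linearity domain of $f_1$ and of $f_2$ that contains a full-dimensional piece of the current germ in the directions $x_1,\dots,x_i$. Finiteness of the domains and the pigeonhole principle make such a choice possible.
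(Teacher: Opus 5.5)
Your proposal is correct and follows the paper's argument essentially step for step. It uses the same transfer between the three choices of $K$, the same normal forms for $U$, and the same citations for Cases~0, 1 and Subcase~2-1. For Subcase~2-2 it uses the same dominant-term argument via \cite[Proposition~2.10(i)]{Joo=Mincheva} and \cite[Remark~2.6]{Nakajima=Song}, together with a successive shrinking of the $\varepsilon_i$.

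Three points of your write-up are more explicit than the paper, which leaves them implicit:
\begin{itemize}
\item the bound $l\le d+1$, via the chain $P(U(i))$ and $\dim K/\boldsymbol{E}(V)=d+1$;
\item the bound $m\le d$, via $\dim\operatorname{rec}(V)\le d$;
\item the requirement that $W'$ keep the same shape, with its base point in $\{x_k\}$ plus the shrunken cone.
\end{itemize}
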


\begin{remark}
    \label{rem4}
Our classification is closely related to the geometric characterization of prime congruences obtained by Friedenberg and Mincheva~\cite[Theorem~1.1]{Friedenberg=Mincheva}.
They show that, for a congruence $E$ with a finite tropical basis on a toric semiring $\mathcal{S}[\mathcal{M}]$, a prime congruence $P$ contains $E$ if and only if $P=P_{\mathcal C_\bullet}$ for a flag of cones $\mathcal C_\bullet$ contained in the corresponding enhanced variety $\widetilde{V}(E)$.
In the setting considered here, where $E=\boldsymbol{E}(V)$ and $V$ is a finite union of nonempty $\boldsymbol{R}$-rational polyhedral sets, Theorem~\ref{thm3} makes this geometric criterion explicit in terms of the classification of prime congruences by $t$-admissible matrices.
In particular, the condition that the defining matrix of $P$ has its rows in the enhanced variety is translated into the existence of explicit polyhedral subsets of $V$ and $\operatorname{rec}(V)$ according to the three cases in Theorem~\ref{thm3}.
\end{remark}

\section{The case of tropical curves with parallel rays}
    \label{section5}

In this section, we apply the classification obtained in the previous section to the rational function semifield of a tropical curve with parallel rays.
We first set up the necessary framework by recalling tropical curves with parallel rays and their rational functions.
We then classify the prime congruences of their rational function semifields and show that their spectra encode the underlying tropical curves, the parallelism of their rays, and their intrinsic metrics.

\subsection{Tropical curves with parallel rays and rational functions}
    \label{subsection5.1}

In this paper, a \textit{graph} is an unweighted, undirected, finite connected nonempty multigraph that may have loops.
For a graph $G$, the set of vertices is denoted by $V(G)$ and the set of edges by $E(G)$.
A vertex $v$ of $G$ is a \textit{leaf end} if $v$ is incident to only one edge and this edge is not a loop.
A \textit{leaf edge} is an edge of $G$ incident to a leaf end.

An \textit{(abstract) tropical curve} is the underlying topological space of the pair $(G, l)$ of a graph $G$ and a function $l: E(G) \to {\boldsymbol{R}}_{>0} \cup \{\infty\}$, where $l$ can take the value $\infty$ only on leaf edges, together with the identification of each edge $e$ of $G$ with the closed interval $[0, l(e)]$.
The interval $[0, \infty]$ is the one-point compactification of $[0, \infty)$, regarded as an extended metric space where the distance between $\infty$ and any other point is infinite.
When a tropical curve $\varGamma$ is obtained from $(G, l)$, the pair $(G, l)$ is called a \textit{model} for $\varGamma$.
For a point $x \in \varGamma$, if $x$ corresponds to $\infty$, it is called a \textit{point at infinity}; otherwise, $x$ is a \textit{finite point}.
We write the set of all points at infinity of $\varGamma$ as $\varGamma_{\infty}$.
The \textit{genus} of $\varGamma$ is its first Betti number, which coincides with $\# E(G) - \#V(G) + 1$ for any model for $\varGamma$.
For a finite point $x \in \varGamma \setminus \varGamma_{\infty}$, the \textit{valency} $\operatorname{val}(x)$ is the number of connected components of $U \setminus \{ x \}$ for any sufficiently small connected neighborhood $U$ of $x$; if $x \in \varGamma_{\infty}$, we define $\operatorname{val}(x) := 1$.
A \textit{ray} of $\varGamma$ is the closed subset of $\varGamma$ corresponding to an edge of infinite length in some model, which necessarily contains a point at infinity.

A continuous map $f : \varGamma \to \boldsymbol{R} \cup \{ \pm \infty \}$ is a \textit{rational function} on $\varGamma$ if $f$ is identically $-\infty$ or a piecewise affine function with integer slopes with a finite number of pieces, taking the values $\pm \infty$ only at points at infinity.
Let $\operatorname{Rat}(\varGamma)$ denote the set of all rational functions on $\varGamma$.
With the pointwise operations $(f \oplus g)(x) := \max\{f(x), g(x)\}$ and $(f \odot g)(x) := f(x) + g(x)$ (continuously extended to $\varGamma_{\infty}$), $\operatorname{Rat}(\varGamma)$ becomes a semifield over $\boldsymbol{T}$ and a $\boldsymbol{T}$-algebra.

An \textit{(abstract) tropical curve with parallel rays} $\varGamma_{\operatorname{par}}$ is an (abstract) tropical curve $\varGamma$ whose rays have an equivalence relation $\sim$ such that for rays $L_1$ and $L_2$ of $\varGamma$, if one ray is contained in another, then $L_1 \sim L_2$ holds.
A \textit{ray} of $\varGamma_{\operatorname{par}}$ is a ray of $\varGamma$.
Two rays of $\varGamma_{\operatorname{par}}$ are \textit{parallel} if they are equivalent under $\sim$, otherwise, these are \textit{nonparallel}.
The \textit{genus} of $\varGamma_{\operatorname{par}}$ is that of $\varGamma$ and \textit{points} (resp.~\textit{finite points}, \textit{points at infinity}) of $\varGamma_{\operatorname{par}}$ are those of $\varGamma$.
We write the set of points at infinity of $\varGamma_{\operatorname{par}}$ as $\varGamma_{\operatorname{par}, \infty}$.
The \textit{valency} of a point $x$ of $\varGamma_{\operatorname{par}}$ is $\operatorname{val}(x)$.  

A function $f : \varGamma \to \boldsymbol{R} \cup \{ \pm \infty \}$ is a \textit{rational function} on $\varGamma_{\operatorname{par}}$ if $f \in \operatorname{Rat}(\varGamma)$ and $f$ has the same slope at infinity along any pair of parallel rays of $\varGamma_{\operatorname{par}}$, or $f \equiv - \infty$.
Let $\operatorname{Rat}(\varGamma_{\operatorname{par}})$ denote the set of all rational functions on $\varGamma_{\operatorname{par}}$, which is a finitely generated subsemifield over $\boldsymbol{T}$ of $\operatorname{Rat}(\varGamma)$ by \cite[Proposition~4.4]{JuAe7}.
We call $\operatorname{Rat}(\varGamma_{\operatorname{par}})$ the \textit{rational function semifield} of $\varGamma_{\operatorname{par}}$.

\subsection{Classification of prime congruences}
    \label{subsection5.2}

We use the semifield $R_n := ((\boldsymbol{R} \times \boldsymbol{Z}^n) \cup \{ -\infty \}, \boxplus, \boxdot)$ over $\boldsymbol{T}$ introduced in \cite[Subsection~4.6]{JuAe7}, where, for $(a, (i_1, \ldots, i_n)), (b, (j_1, \ldots, j_n)) \in \boldsymbol{R} \times \boldsymbol{Z}^n, \boldsymbol{x} \in (\boldsymbol{R} \times \boldsymbol{Z}^n) \cup \{ -\infty \}$,
\begin{align*}
(a, (i_1, \ldots, i_n)) \boxplus (b, (j_1, \ldots, j_n)) &:= \begin{cases}
(a, (i_1, \ldots, i_n)) \quad \text{if }a > b,\\
(a, (\operatorname{max}\{ i_1, j_1 \}, \ldots, \operatorname{max}\{ i_n, j_n \})) \quad \text{if }a = b,\\
(b, (j_1, \ldots, j_n)) \quad \text{if }a < b,
\end{cases}\\
\boldsymbol{x} \boxplus (-\infty) &:= -\infty \boxplus \boldsymbol{x} := \boldsymbol{x},\\
(a, (i_1, \ldots, i_n)) \boxdot (b, (j_1, \ldots, j_n)) &:= (a + b, (i_1 + j_1, \ldots, i_n + j_n)),\\
\boldsymbol{x} \boxdot (-\infty) &:= -\infty \boxdot \boldsymbol{x} := -\infty.
\end{align*}

We set $R_0 := \boldsymbol{T}$.

\begin{remark}
    \label{rem5}
Let $K_n$ be $\boldsymbol{T}[\boldsymbol{X}^{\pm}]_n, \overline{\boldsymbol{T}[\boldsymbol{X}^{\pm}]}_n$ or $\overline{\boldsymbol{T}(\boldsymbol{X})}_n$.
For any $n \ge 1$, the following isomorphisms hold:
\begin{itemize}
    \item $K_n / P (\begin{pmatrix} 0 & \cdots & 0 \end{pmatrix} ) \cong \boldsymbol{B}$,
    \item $K_n / P (U(x_1;1)) \cong \boldsymbol{T}$, where $x_1 \in \boldsymbol{R}^n$,
    \item $K_n / P (U(x_1,x_2;1)) \cong K_1 / P (U(0,1;1)) \cong R_1$, where $x_1 \in \boldsymbol{R}^n$ and $x_2 \in \boldsymbol{Z}^n \setminus \{ \boldsymbol{0}\}$,
    \item $K_n / P (Z(y_1)) \cong K_1 / P (Z(1))$, where $y_1 \in \boldsymbol{Z}^n \setminus \{ \boldsymbol{0}\}$, and
    \item $K_n / P (U(x_1,x_2;2)) \cong K_1 / P (U(1,0;2))$, where $x_1 \in \boldsymbol{Z}^n \setminus \{ \boldsymbol{0}\}$ and $x_2 \in \boldsymbol{R}^n$.
\end{itemize}
Note that all of the above quotients are semifields, and quotients corresponding to distinct types are not isomorphic to each other.

Moreover, by the definition of $P(U)$, the following equivalences hold:
\begin{itemize}
    \item for any $x_1, x_1^{\prime} \in \boldsymbol{R}^n$, $x_1 = x_1^{\prime}$ if and only if $P (U(x_1;1)) = P (U(x_1^{\prime};1))$,
    \item for any $x_1, x_2, x_1^{\prime}, x_2^{\prime} \in \boldsymbol{R}^n$ such that $x_2, x_2^{\prime} \not= \boldsymbol{0}$, $x_1 = x_1^{\prime}$ and there exists $\delta > 0$ such that $x_2 = \delta x_2^{\prime}$ if and only if $P (U(x_1,x_2;1)) = P (U(x_1^{\prime},x_2^{\prime};1))$,
    \item for any $y_1, y_1^{\prime} \in \boldsymbol{R}^n \setminus \{ \boldsymbol{0} \}$, there exists $\delta > 0$ such that $y_1 = \delta y_1^{\prime}$ if and only if $P (Z(y_1)) = P (Z(y_1^{\prime}))$,
    \item for any $x_1, x_2, x_1^{\prime}, x_2^{\prime} \in \boldsymbol{R}^n$ such that $x_1, x_1^{\prime} \not= \boldsymbol{0}$, there exist $\delta > 0$ and $\lambda \in \boldsymbol{R}$ such that $x_1 = \delta x_1^{\prime}$ and $x_2 = x_2^{\prime} + \lambda x_1^{\prime}$ if and only if $P(U(x_1,x_2;2)) = P(U(x_1^{\prime},x_2^{\prime};2))$,
\end{itemize}
\end{remark}

Let $\varGamma_{\operatorname{par}}$ be a tropical curve with parallel rays and $K := \operatorname{Rat}(\varGamma_{\operatorname{par}})$.

\begin{definition}
    \label{dfn3}
A prime congruence $P \in \operatorname{CSpec}(K)$ is of \textit{Type $0$} (resp.~\textit{Type F-$1$}, \textit{Type F-$2$}, \textit{Type I-$1$}, and \textit{Type I-$2$}) if $K/P$ is isomorphic to $\boldsymbol{B}$ (resp.~$\boldsymbol{T}, R_1, \boldsymbol{T}[Y^{\pm}]/P(Z(1))$, and $\boldsymbol{T}[Y^{\pm}]/P(U(1,0;2))$).
\end{definition}

\begin{example}[cf.~{\cite[Proposition~4.41]{JuAe7}}]
    \label{ex2}
Let $x \in \varGamma_{\operatorname{par}, \infty}$ and $\sim_x$ the relation on $K$ defined by
\begin{align*}
f \sim_x g \quad \Longleftrightarrow \quad \exists U : \text{a neighborhood of }x \text{ such that } f|_U = g|_U.
\end{align*}
Then $P_x := \{ (f, g) \in K^2 \,|\, f \sim_x g\}$ is a prime congruence on $K$ of Type I-$2$.
In fact, a $\boldsymbol{T}$-algebra isomorphism is given as follows.
Let $f_1, \ldots, f_n \in K \setminus \{ -\infty \}$ be generators of $K$ as a semifield over $\boldsymbol{T}$.
Let $y$ be a finite point on a ray containing $x$ of $\varGamma_{\operatorname{par}}$ such that on the segment $[y, x] \subset \varGamma_{\operatorname{par}}$, all of $f_1, \ldots, f_n$ have constant slopes $s_1, \ldots, s_n$, respectively.
The correspondence $f_i \mapsto [f_i(y) \odot Y^{\odot s_i}]$, where $[f_i(y) \odot Y^{\odot s_i}]$ denotes the equivalence class of $f_i(y) \odot Y^{\odot s_i} \in \boldsymbol{T}[Y^{\pm}]$ in $\boldsymbol{T}[Y^{\pm}] / P (U(1,0;2))$, defines a $\boldsymbol{T}$-algebra homomorphism $\psi : K \to \boldsymbol{T}[Y^{\pm}] / P (U(1,0;2))$.
Since $f_1, \ldots, f_n \not= -\infty$ generates $K$, the greatest common divisor of $s_1, \ldots, s_n$ is one, and so $\psi$ is surjective.
Then we can directly check that $\operatorname{Ker}(\psi) = P_x$.
Note that this is independent of the choice of $f_1, \ldots, f_n$ and $y$.
\end{example}

\begin{theorem}[Complete Classification]
    \label{thm4}
Every prime congruence $P \in \operatorname{CSpec}(K)$ is uniquely classified as one of Types $0$, F-$1$, F-$2$, I-$1$, and I-$2$.
\end{theorem}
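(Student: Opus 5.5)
The plan is to embed $\varGamma_{\operatorname{par}}$ into a Euclidean space and then read off the answer from Theorem~\ref{thm3} and Remark~\ref{rem5}. By \cite[Proposition~4.4]{JuAe7}, $K$ is finitely generated as a semifield over $\boldsymbol{T}$, say by $f_1, \ldots, f_n \in K \setminus \{-\infty\}$. This gives a surjective $\boldsymbol{T}$-algebra homomorphism $\overline{\boldsymbol{T}(\boldsymbol{X})}_n \twoheadrightarrow K$ with $X_i \mapsto f_i$. I expect (cf.~\cite{JuAe7}) that its kernel is $\boldsymbol{E}(V)$, where $V$ is the image of the finite part $\varGamma \setminus \varGamma_{\infty}$ under $x \mapsto (f_1(x), \ldots, f_n(x))$. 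Then $V$ is a finite union of $\boldsymbol{R}$-rational polyhedral sets (segments and rays with integer directions), of dimension $d = 1$. The key property of the parallel structure is that parallel rays are mapped to rays with the \emph{same} direction vector $(s_1, \ldots, s_n) \in \boldsymbol{Z}^n$. Nonparallel rays may be separated by suitable generators, so that they receive distinct directions. By \cite[Proposition~2.13]{Joo=Mincheva2}, $\operatorname{CSpec}(K)$ is identified with $V(\boldsymbol{E}(V)) \subset \operatorname{CSpec}(\overline{\boldsymbol{T}(\boldsymbol{X})}_n)$, and the quotients are preserved: $K/P \cong \overline{\boldsymbol{T}(\boldsymbol{X})}_n / P'$.

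Next I apply Theorem~\ref{thm3} with $d = 1$, which forces $l, m \le 2$. The possible matrices are the following.
\begin{itemize}
\item $U = (0 \cdots 0)$.
\item $Z(y_1)$, where $y_1 \ne \boldsymbol{0}$ and $\operatorname{Cone}\{y_1\} \subset \operatorname{rec}(V)$.
\item $U(x_1;1)$ with $x_1 \in V$.
\item $U(x_1,x_2;1)$ with $x_1 + [0,\varepsilon]x_2 \subset V$.
\item $U(x_1,x_2;2)$ with $x_1 \ne \boldsymbol{0}$ and $z + \operatorname{Cone}\{x_1\} \subset V$ for some $z \in x_2 + \operatorname{Cone}\{x_1\}$.
\end{itemize}
Note that $Z(y_1,y_2)$ is excluded since $m \le d = 1$. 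The case $U(x_1,x_2;2)$ with $l = 2$ requires that $V$ contain a half-line in direction $x_1$, and $U(x_1, x_2, x_3;k)$ is excluded since $l \le 2$ would be violated.

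In each surviving case the direction vectors are forced to be rational:
\begin{itemize}
\item In $U(x_1,x_2;1)$, $x_2$ is a positive multiple of an edge direction of $V$, which lies in $\boldsymbol{Z}^n \setminus \{\boldsymbol{0}\}$.
\item In $Z(y_1)$ and $U(x_1,x_2;2)$, $y_1$ and $x_1$ are positive multiples of ray directions, which are integral.
\end{itemize}
Rescaling via Remark~\ref{rem5} brings each matrix into the normalized form required there. Remark~\ref{rem5} then gives the corresponding quotients: $\boldsymbol{B}$, $\boldsymbol{T}$, $R_1$, $\boldsymbol{T}[Y^{\pm}]/P(Z(1))$ and $\boldsymbol{T}[Y^{\pm}]/P(U(1,0;2))$, i.e., Types $0$, F-$1$, F-$2$, I-$1$ and I-$2$. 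Here I use that all three choices of $K_n$ give isomorphic quotients, and that $K_1/P(\cdot)$ agrees with $\boldsymbol{T}[Y^{\pm}]/P(\cdot)$ by the homeomorphisms of Remark~\ref{rem1}, which preserve the quotients since the prime contains $\boldsymbol{E}(\boldsymbol{R})$. Uniqueness of the type follows immediately from the statement in Remark~\ref{rem5} that quotients of distinct types are pairwise non-isomorphic.

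The main obstacle is the first step: justifying that $K \cong \overline{\boldsymbol{T}(\boldsymbol{X})}_n/\boldsymbol{E}(V)$ for the polyhedral image $V$. The subtle parts are that $V$ is closed and polyhedral, and that the image directions are nonzero integral vectors, so that no ray collapses to a point. If a ray were contracted, a generator set could always be enlarged with a function having nonzero slope along that ray, using the parallel-class structure. I would try to cite the embedding results of \cite{JuAe7} for this step. If they are not available in exactly this form, I would instead argue directly that two rational functions on $\varGamma_{\operatorname{par}}$ coincide iff their expressions in the generators agree on $V$.
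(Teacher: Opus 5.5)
Your proposal is correct and follows the paper's route. You realize $K$ as $\overline{\boldsymbol{T}(\boldsymbol{X})}_n/\boldsymbol{E}(V)$ via a finite generating set, apply Theorem~\ref{thm3} with $\operatorname{dim}(V) \le 1$, and read off the quotient types from Remark~\ref{rem5}. The paper takes the first step from \cite[Proposition~4.6]{JuAe7}, but your direct argument (evaluation at finite points plus density) already gives it. One small correction: $d = 0$ when $\varGamma_{\operatorname{par}}$ is a single point, but this only leaves Types $0$ and F-$1$ and does not affect the argument.
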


\begin{proof}
This is an immediate consequence of \cite[Proposition~4.6]{JuAe7} and Theorem~\ref{thm3}.
\end{proof}

\begin{corollary}
    \label{cor3}
Each prime congruence on $K$ of Type F-$2$ is contained in a unique prime congruence on $K$ of Type F-$1$.
Moreover, each prime congruence on $K$ of Type I-$2$ is contained in a unique prime congruence on $K$ of Type I-$1$.
\end{corollary}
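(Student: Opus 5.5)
The plan is to transport the problem to the ambient semifield $\overline{\boldsymbol{T}(\boldsymbol{X})}_n$ and read off the chains there. By \cite[Proposition~4.4]{JuAe7}, $K$ is finitely generated over $\boldsymbol{T}$. Hence there is a surjection $\overline{\boldsymbol{T}(\boldsymbol{X})}_n \twoheadrightarrow K$, and (as in \cite[Proposition~4.6]{JuAe7}) it identifies $K$ with $\overline{\boldsymbol{T}(\boldsymbol{X})}_n/\boldsymbol{E}(V)$ for a one-dimensional finite union $V$ of $\boldsymbol{R}$-rational polyhedral sets. By \cite[Proposition~2.13]{Joo=Mincheva2}, prime congruences on $K$ correspond, with inclusions preserved, to prime congruences $P(U)\supset\boldsymbol{E}(V)$. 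These are classified by Theorem~\ref{thm3} with $d=1$.

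\textbf{Step 1: match types to matrices.} Using Remark~\ref{rem5}, identify the types as follows:
\begin{itemize}
\item Type F-$1$ is $P(U(x_1;1))$ with $x_1\in V$.
\item Type F-$2$ is $P(U(x_1,x_2;1))$.
\item Type I-$1$ is $P(Z(y_1))$.
\item Type I-$2$ is $P(U(x_1,x_2;2))$.
\end{itemize}
The quotient isomorphism types are pairwise distinct, so this identification is forced.

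\textbf{Step 2: existence.} Deleting the last row of a t-admissible matrix gives the chain $P(U)=P(U(2))\subsetneq P(U(1))$ recalled in Section~2. Thus $P(U(x_1,x_2;1))\subset P(U(x_1;1))$, and the larger congruence is of Type F-$1$ because it contains $\boldsymbol{E}(V)$ and has quotient $\boldsymbol{T}$. Similarly, $P(U(x_1,x_2;2))\subset P(Z(x_1))$, and $P(Z(x_1))$ is of Type I-$1$ by Case 1 of Theorem~\ref{thm3}.

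\textbf{Step 3: uniqueness.} By \cite[Theorem~4.13(iii)]{Joo=Mincheva}, every proper congruence containing $P(U)$ with $U$ of two rows equals $P(U(2))$, $P(U(1))$ or $P(U(0))$. These three have Krull dimensions of quotients $2,1,0$, and their quotients are respectively $P(U)$ itself, a one-row prime, and $\boldsymbol{B}$. The only prime strictly containing $P(U)$ that is not of Type $0$ is therefore $P(U(1))$. Transporting this back through the bijection of Step~0 gives uniqueness in $K$.

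The main obstacle is Step~1: making sure the abstract isomorphism type of $K/P$ determines the shape of the matrix. This needs the full list in Theorem~\ref{thm3} with $d=1$, which bounds the number of rows by $2$. It also needs that a one-row matrix with nonzero first column is always $U(x_1;1)$, so that F-$1$ primes are exactly these.
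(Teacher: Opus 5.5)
Your argument is correct and essentially matches the paper, which gives no separate proof of this corollary and reads it off the same way: the matrix forms in Theorem~\ref{thm3} with $d=1$ (via \cite[Proposition~4.6]{JuAe7}), the chain $P(U(2))\subsetneq P(U(1))\subsetneq P(U(0))$, and \cite[Theorem~4.13(iii)]{Joo=Mincheva}. Two small things to tidy: Theorem~4.13(iii) is stated on $\boldsymbol{T}[\boldsymbol{X}^{\pm}]_n$, so for uniqueness pull primes back along $\iota\circ\pi$ as in Remark~\ref{rem1} (harmless, since only primes are needed), and note that the case where $\varGamma_{\operatorname{par}}$ is a single point is vacuous.
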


\begin{definition}
    \label{dfn4}
We call the prime congruences on $K$ of Type F-$1$ (resp.~Type I-$2$) \textit{finite points} (resp.~\textit{points at infinity}) of $\operatorname{CSpec}(K)$.
The set $X \subset \operatorname{CSpec}(K)$ consisting of all finite points and points at infinity is called the \textit{geometric point set} of $\operatorname{CSpec}(K)$.
Each element of $X$ is called a \textit{geometric point} of $\operatorname{CSpec}(K)$.    
\end{definition}

\subsection{Geometric points}
    \label{subsection5.3}

We regard $X$ as a subspace of $\operatorname{CSpec}(K)$ with respect to the Zariski topology.

\begin{theorem}[Geometric Points]
    \label{thm5}
The map $\varphi : \varGamma_{\operatorname{par}} \to X$ given by the correspondence
\begin{align*}
\varGamma_{\operatorname{par}} \setminus \varGamma_{\operatorname{par}, \infty} \ni &x \mapsto \operatorname{Ker}(K \to \boldsymbol{T}; f \mapsto f(x)) \in X,\\
\varGamma_{\operatorname{par}, \infty} \ni &x \mapsto P_x \in X
\end{align*}
is a homeomorphism.
\end{theorem}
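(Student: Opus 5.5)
We need to show that $\varphi$ is well defined, bijective onto $X$, continuous, and open onto its image.

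The plan is to realize $K=\operatorname{Rat}(\varGamma_{\operatorname{par}})$ concretely as $\overline{\boldsymbol{T}(\boldsymbol{X})}_n/\boldsymbol{E}(V)$. By \cite[Proposition~4.4]{JuAe7}, choose generators $f_1,\ldots,f_n\in K\setminus\{-\infty\}$ of $K$ as a semifield over $\boldsymbol{T}$, and consider the map
\begin{align*}
\Phi:\varGamma_{\operatorname{par}}\setminus\varGamma_{\operatorname{par},\infty}\to\boldsymbol{R}^n,\quad x\mapsto(f_1(x),\ldots,f_n(x)).
\end{align*}
As in \cite[Proposition~4.6]{JuAe7}, $\Phi$ is a homeomorphism onto a closed finite union $V$ of one-dimensional $\boldsymbol{R}$-rational polyhedral sets. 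Under this identification:
\begin{itemize}
\item rays of $\varGamma_{\operatorname{par}}$ go to rays of $V$, with parallel rays having the same direction (since the slopes $s_i$ agree), and nonparallel rays having distinct primitive directions;
\item $K\cong\overline{\boldsymbol{T}(\boldsymbol{X})}_n/\boldsymbol{E}(V)$.
\end{itemize}
We then transport everything to $\operatorname{CSpec}(\overline{\boldsymbol{T}(\boldsymbol{X})}_n/\boldsymbol{E}(V))$ and read the prime congruences off Theorem~\ref{thm3} with $d=1$.

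\textbf{Bijectivity.} For a finite point $x$, $\operatorname{Ker}(f\mapsto f(x))$ corresponds to $\boldsymbol{E}(\{\Phi(x)\})/\boldsymbol{E}(V)=P(U(\Phi(x);1))$, which is of Type F-$1$. By Theorem~\ref{thm3} (Case 2, $l=1$), every Type F-$1$ prime is $P(U(z;1))$ with $z\in V$. By Remark~\ref{rem5}, $P(U(z;1))$ determines $z$, so $\varphi$ restricted to finite points is a bijection onto the Type F-$1$ primes; this also follows from Remark~\ref{rem2}. For points at infinity, Example~\ref{ex2} shows $P_x$ is of Type I-$2$. By Theorem~\ref{thm3} (Case 2 with $k=2$, $l=2$), the Type I-$2$ primes are $P(U(x_1,x_2;2))$ with $\{x_2\}+\operatorname{Cone}\{x_1\}$ contained in $V$ after translation, i.e.\ with $x_1$ the direction of a ray of $V$. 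By the last item of Remark~\ref{rem5}, such a prime depends exactly on the ray direction $x_1$ up to positive scaling and on $x_2$ modulo $\boldsymbol{R}x_1$, i.e.\ on the line containing the ray. Two distinct ends of $\varGamma$ give distinct $P_x$: a rational function on $\varGamma_{\operatorname{par}}$ can be chosen that agrees near one end and not near the other, for instance by differing by a constant on a neighborhood of one ray. Conversely, each admissible pair $(x_1,x_2)$ arises from the ray of $V$ it determines. So $\varphi$ is a bijection onto $X$.

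\textbf{Topology.} Closed sets of $X$ are $X\cap V(C)$ for congruences $C$ on $K$. We must show these pull back, under $\varphi$, exactly to the closed sets of $\varGamma_{\operatorname{par}}$.

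For continuity, note that for a finite point $x$ we have $C\subset\varphi(x)$ iff $f(x)=g(x)$ for all $(f,g)\in C$. For $x$ at infinity, $C\subset P_x$ iff $f=g$ near $x$. Hence $\varphi^{-1}(V(C))$ is the union of the set $\boldsymbol{V}(C)\cap(\varGamma\setminus\varGamma_\infty)$ (closed, since the congruence-variety topology coincides with the Euclidean one as in Subsection~\ref{subsection2.4}) and the set of ends near which $C$ holds identically. Since the functions are piecewise linear with finitely many pieces, if $f=g$ on finite points accumulating to $x$ along the ray, then $f=g$ on a tail of the ray. Thus the set of ends meeting the closure of $\boldsymbol{V}(C)$ is exactly the set of ends lying in $\varphi^{-1}(V(C))$, and $\varphi^{-1}(V(C))$ is the closure of $\boldsymbol{V}(C)$ in $\varGamma_{\operatorname{par}}$, which is closed.

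For openness, i.e.\ that every closed $F\subset\varGamma_{\operatorname{par}}$ is of this form, the finite part is handled by the coincidence of topologies as in Remark~\ref{rem2}. The main obstacle is points at infinity: a closed set containing an end $x$ but only a bounded part of its ray, or even just $\{x\}$ alone, is not the closure of its finite part. The fix I would try is to use the fact that $V(C)$ is not only the zero locus of $C$ on finite points: take $C=\langle (h,0)\rangle$ with $h$ vanishing on $F\setminus\varGamma_\infty$ and $h$ constant $0$ near each end in $F$ but with prescribed nonzero behavior elsewhere. This requires $h\in K$, whereas such an $h$ is merely in $\operatorname{Rat}(\varGamma)$. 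Since parallel rays must share slopes at infinity, I would instead build $h$ with zero slope at all ends, bounded away from $0$ on rays not ending in $F$ outside a compact set, and vanishing on a small compact neighborhood system. If singleton ends $\{x\}$ cannot be cut out this way, the correct conclusion is a homeomorphism only after identifying the appropriate topology; verifying this closedness for ends is where I expect the real work.
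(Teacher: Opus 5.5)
Your bijectivity argument is fine in substance, but the topological part has a genuine gap. You never prove that $\varphi$ is closed at the points at infinity. This is exactly the input the paper's proof uses when it asserts that the points at infinity form closed subsets of $X$; the finite part is then handled by \cite[Proposition~4.6]{JuAe7} and Remark~\ref{rem2}. Your proposed repair cannot work. If $h\in K$ satisfies $h=0$ near $x\in\varGamma_{\operatorname{par},\infty}$, then $h$ vanishes on a whole tail of the ray, so $X\cap V(\langle (h,0)\rangle_K)$ contains finite points arbitrarily close to $x$. The same holds for any finitely generated congruence. The missing observation is that Zariski-closed sets are closed under arbitrary intersections, i.e.\ $C$ may be infinitely generated. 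Parametrize the ray $L$ ending at $x$ by $t\in[0,\infty]$ from its finite endpoint, and let $h_k$ be $\max\{k-t,0\}$ on $L$ and the constant $k$ off $L$. Since $h_k$ has slope $0$ at every point at infinity, $h_k\in K$ whatever the parallelism; this also settles your worry that the needed functions lie only in $\operatorname{Rat}(\varGamma)$. For $C:=\langle (h_k,0)\mid k\ge 1\rangle_K$ one has:
\begin{itemize}
\item $C\subset P_x$;
\item $C\not\subset\varphi(y)$ for every finite point $y$ (take $k>t(y)$, or any $k$ if $y\notin L$);
\item $C\not\subset P_{x'}$ for $x'\neq x$, because $h_k=k$ near $x'$.
\end{itemize}
Hence $X\cap V(C)=\{P_x\}$, and your suspicion that the statement might fail is unfounded.

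The same $C$ refutes a claim in your continuity step. You assert that $\varphi^{-1}(V(C))$ is the closure of its finite part, but here the finite part is empty while $\varphi^{-1}(V(C))=\{x\}$: the tail on which $f=g$ depends on the pair $(f,g)\in C$. Only the inclusion you actually proved holds, namely that ends in the closure of the finite part lie in $\varphi^{-1}(V(C))$. Continuity still survives, because the extra ends form a finite, hence closed, subset of $\varGamma_{\operatorname{par}}$. But this false equality is what made singletons at infinity look non-closed.

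To finish, write a closed $F\subset\varGamma_{\operatorname{par}}$ as $\overline{F_0}\cup F_\infty$ with $F_0:=F\setminus\varGamma_{\operatorname{par},\infty}$ and $F_\infty:=F\cap\varGamma_{\operatorname{par},\infty}$. The set $\varphi(F_\infty)$ is a finite union of closed points by the above. Moreover $\varphi(\overline{F_0})=X\cap V(C_0)$ for $C_0:=\{(f,g)\in K^2\mid f|_{F_0}=g|_{F_0}\}$:
\begin{itemize}
\item on finite points this is the coincidence of congruence-variety and Euclidean topologies used in Remark~\ref{rem2};
\item an end in $\overline{F_0}$ lies in $V(C_0)$ by your piecewise-linearity argument;
\item an end $x\notin\overline{F_0}$ is excluded by the pair $(g_k,0)\in C_0$, where $g_k$ is $\min\{\max\{t-k,0\},1\}$ on the ray to $x$ and $0$ elsewhere, with $k$ chosen so that $F_0$ misses $\{t>k\}$.
\end{itemize}
Then $\varphi(F)=(X\cap V(C_0))\cup\varphi(F_\infty)$ is closed as a finite union of closed sets.
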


\begin{proof}
Since the points at infinity, which form closed subsets, in $\varGamma_{\operatorname{par}}$ and $X$ correspond via $\varphi$, by \cite[Proposition~4.6]{JuAe7} and Remark~\ref{rem2}, $\varphi$ is a homeomorphism.
\end{proof}

\begin{corollary}[Encoding of Parallel Rays]
    \label{cor4}
Two points $P$ and $Q$ at infinity of $\operatorname{CSpec}(K)$ are contained in the same prime congruence on $K$ of Type I-$1$ if and only if $\varphi^{-1}(P)$ and $\varphi^{-1}(Q)$ are contained in parallel rays of $\varGamma_{\operatorname{par}}$.
\end{corollary}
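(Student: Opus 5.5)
The plan is to reduce the statement to the Euclidean classification of Theorem~\ref{thm3}, applied to an embedding of $\varGamma_{\operatorname{par}}$. By \cite[Proposition~4.6]{JuAe7}, choosing generators $f_1, \ldots, f_n$ of $K$ gives a surjection $\overline{\boldsymbol{T}(\boldsymbol{X})}_n \twoheadrightarrow K$. Its kernel is $\boldsymbol{E}(V)$, where $V \subset \boldsymbol{R}^n$ is the image of the finite part $\varGamma_{\operatorname{par}} \setminus \varGamma_{\operatorname{par},\infty}$ under $x \mapsto (f_1(x), \ldots, f_n(x))$. Then:
\begin{itemize}
\item $V$ is a finite union of $\boldsymbol{R}$-rational polyhedral sets of dimension $1$;
\item each ray of $\varGamma_{\operatorname{par}}$ maps to an unbounded half-line, and parallel rays map to half-lines with the same primitive integer direction, since the slopes at infinity agree;
\item $\operatorname{rec}(V)$ is the union of the rays $\operatorname{Cone}\{v_j\}$, one for each parallel class $j$.
\end{itemize}
Hence the points at infinity of $\operatorname{CSpec}(K)$ correspond to prime congruences $P(U(v,z;2))$ with $v$ a recession direction. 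Two such congruences lie in the same Type I-$1$ congruence exactly when they lie in the same $P(Z(v))$.

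The first step is to identify which Type I-$1$ congruence contains a given point at infinity. For $x \in \varGamma_{\operatorname{par},\infty}$ on a ray $L$, Example~\ref{ex2} identifies $P_x$ with a congruence $P(U(v_L,z;2))$. Here $v_L \in \boldsymbol{Z}^n \setminus \{\boldsymbol{0}\}$ is the slope vector $(s_1, \ldots, s_n)$ of the generators along $L$ towards $x$, and $z$ is the image of a point of $L$. This matches Case~$2$ of Theorem~\ref{thm3} with $k=2$, $l=2$, where the half-line $\{z\} + \operatorname{Cone}\{v_L\} \subset V$ is the image of the ray. By the chain $P(U) \subsetneq P(U(1))$ from Section~\ref{section2} and \cite[Theorem~4.13(iii)]{Joo=Mincheva}, the only prime congruences strictly containing $P(U(v_L,z;2))$ are $P(Z(v_L))$ and $P(U(0))$. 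By Remark~\ref{rem5}, $P(Z(v_L))$ has Type I-$1$, and $P(U(0))$ has Type $0$. So $P_x$ lies in exactly one Type I-$1$ congruence, namely $P(Z(v_L))$, which also re-proves the second half of Corollary~\ref{cor3}. The phrase ``contained in the same prime congruence'' I read as containment of congruences, consistent with Corollary~\ref{cor3}.

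With this, the claim reduces to the following: $P(Z(v_L)) = P(Z(v_{L'}))$ if and only if $L \sim L'$. By Remark~\ref{rem5}, the left-hand side holds iff $v_L = \delta v_{L'}$ for some $\delta > 0$. Since both vectors are integral, this is equivalent to $L$ and $L'$ having proportional slope vectors with positive ratio. If $L \sim L'$, every $f \in K$ has equal slopes at infinity along $L$ and $L'$, so $v_L = v_{L'}$. This gives one direction.

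The main obstacle is the converse: nonparallel rays should have non-proportional slope vectors. I would construct a rational function on $\varGamma_{\operatorname{par}}$ with slope $1$ at infinity along every ray in the class of $L$ and slope $0$ along all other rays. A suitable choice is a function that is constant on a large compact part of the curve and then increases linearly along the rays parallel to $L$. I would check that it is continuous and piecewise affine with integer slopes; the equal-slope condition within each parallel class holds by construction, so it lies in $K$. It is a tropical rational expression in the generators, and its slope at infinity along a ray is determined linearly by $v_L$, via a tropical monomial or quotient in the $f_i$ evaluated near infinity. So if $v_L = \delta v_{L'}$, its slopes along $L$ and $L'$ would be proportional with positive ratio, contradicting $1$ versus $0$. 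Hence $L \sim L'$.

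Finally, the ray containing $\varphi^{-1}(P)$ is well-defined up to parallelism, because nested rays are parallel by definition of $\sim$. Combining the two directions with Theorem~\ref{thm5} gives the corollary.
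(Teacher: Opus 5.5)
Your proposal is correct and is essentially the argument the paper leaves implicit for this corollary, which it states without proof. By \cite[Proposition~4.6]{JuAe7} and Theorem~\ref{thm3}, each point at infinity is $P(U(v_L,z;2))$ and lies in the unique Type I-$1$ congruence $P(Z(v_L))$, and Remark~\ref{rem5} then reduces the claim to positive proportionality of the direction vectors $v_L$.

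The one place you go further than the paper is in proving directly, with a separating rational function, that nonparallel rays have non-proportional directions; the paper takes this from the embedding in \cite{JuAe7}. One imprecision there: the slope at infinity of $G(f_1,\ldots,f_n)$ along a ray of direction $v$ is $\max_j\langle \boldsymbol{m}_j,v\rangle-\max_k\langle \boldsymbol{n}_k,v\rangle$, which is positively homogeneous in $v$, not linear. The dominant monomial may differ between rays, but this positive homogeneity is exactly what your contradiction needs, so the argument goes through.
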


\begin{corollary}[Valency of Finite Points]
    \label{cor5}
Each finite point $P$ of $\operatorname{CSpec}(K)$ contains exactly $n$ prime congruences on $K$ of Type F-$2$, where $n$ is the valency of $\varphi^{-1}(P)$ in $\varGamma_{\operatorname{par}}$.
Moreover, if $n \ge 1$, then the quotient of $K$ by the intersection of these prime congruences on $K$ of Type F-$2$ is isomorphic to $R_n$.
\end{corollary}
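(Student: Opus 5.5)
Via \cite[Proposition~4.6]{JuAe7}, I would realize $K=\operatorname{Rat}(\varGamma_{\operatorname{par}})$ as $\overline{\boldsymbol{T}(\boldsymbol{X})}_N/\boldsymbol{E}(V)$, where $V\subset\boldsymbol{R}^N$ is the image of $\varGamma_{\operatorname{par}}\setminus\varGamma_{\operatorname{par},\infty}$ under a map given by generators $f_1,\dots,f_N$. This $V$ is a one-dimensional finite union of $\boldsymbol{R}$-rational polyhedral sets, namely segments and rays. For a finite point $x$, the primes of Type F-$2$ under $P:=\varphi(x)=P(U(x;1))$ are, by Theorem~\ref{thm3} with $l=2$, $k=1$, exactly the $P(U(x,x_2;1))$ such that $V\supset\{x+\delta x_2 \mid 0\le\delta\le\varepsilon\}$ for some $\varepsilon>0$. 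Indeed, every prime strictly contained in $P(U(x;1))$ has the form $P(U(x,x_2,\dots;1))$, and since $d=1$ we have $l\le 2$.

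By Remark~\ref{rem5}, two such primes coincide if and only if the directions $x_2$ agree up to positive scaling. So the Type F-$2$ primes inside $P$ are in bijection with the germs of segments of $V$ emanating from $x$. I would check that, because the embedding is a homeomorphism onto $V$ (Theorem~\ref{thm5}, Remark~\ref{rem2}) and is piecewise linear, these outgoing directions correspond bijectively to the connected components of $U\setminus\{x\}$ for a small connected neighborhood $U$ of $x$. This gives exactly $\operatorname{val}(x)=n$ primes. One point needs care here: two distinct branches at $x$ could, a priori, leave in the same direction in $\boldsymbol{R}^N$. I would exclude this by enlarging the generating set with functions whose slopes on the branches at $x$ separate them; such functions exist in $\operatorname{Rat}(\varGamma_{\operatorname{par}})$. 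The enlargement changes neither $K$ nor the intrinsic description.

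For the second statement, write $Q_1,\dots,Q_n$ for these primes with directions $v_1,\dots,v_n$ (primitive integer vectors after scaling, by rationality). Each $K/Q_i\cong R_1$ via $f\mapsto(f(x),s_i(f))$, where $s_i(f)$ is the outgoing slope of $f$ along branch $i$. The quotient $K/\bigcap Q_i$ then embeds into $\prod K/Q_i$ via $f\mapsto(f(x),(s_1(f),\dots,s_n(f)))$. I would check that this map is a homomorphism into $R_n$ by a direct computation. For $f\oplus g$, if $f(x)>g(x)$ the value is $f$'s; if $f(x)=g(x)$, the outgoing slope of $\max$ on branch $i$ is $\max\{s_i(f),s_i(g)\}$. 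This matches $\boxplus$. Multiplication is additive in both coordinates. The kernel of this map is $\bigcap Q_i$ by construction, so it remains only to prove surjectivity.

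I expect surjectivity to be the main obstacle: for every $a\in\boldsymbol{R}$ and $(m_1,\dots,m_n)\in\boldsymbol{Z}^n$, I need a rational function on $\varGamma_{\operatorname{par}}$ with value $a$ at $x$ and outgoing slope $m_i$ along branch $i$. I would build it locally as a tent-like function: slope $m_i$ on a short segment of branch $i$, then made constant farther out, suitably corrected to remain continuous on loops. To handle loops, where two branches lie on the same edge, and parallel rays, where slopes at infinity must agree, I would use functions that are constant outside a small star neighborhood of $x$. For such functions the parallelism constraint is vacuous, and integer-slope piecewise-affine bumps exist with arbitrary prescribed integer initial slopes, for instance $a\odot\bigodot_i h_i$ with $h_i$ supported near branch $i$.
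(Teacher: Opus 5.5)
Your proof is correct. For the first statement it follows the paper's route: the paper only says the count ``follows directly from the proof of Theorem~\ref{thm4}'', i.e.\ from the embedding of \cite[Proposition~4.6]{JuAe7} combined with Theorem~\ref{thm3}, and you supply the branch--direction count that the paper leaves implicit. Your enlargement of the generating set is harmless but unnecessary. Injectivity of the embedding already forces the slope vectors $v_1,\dots,v_n$ of the generators along the branches at $x$ to be nonzero, with no two positively proportional; otherwise two branches would have overlapping images near $x$.

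For the second statement you take a genuinely different route.
\begin{itemize}
\item \emph{The paper's route.} It identifies $\bigcap_i Q_i$ with the germ congruence $\{(f,g)\in K^2 \mid f=g \text{ near } \varphi^{-1}(P)\}$. It then imports the isomorphism of that germ quotient with $R_n$ from \cite[Propositions~4.41 and 4.42]{JuAe7}.
\item \emph{Your route.} You write down the map $f\mapsto(f(x),(s_1(f),\dots,s_n(f)))$ explicitly and check the homomorphism property by the case analysis matching $\boxplus$. You read off its kernel as $\bigcap_i Q_i$ from $Q_i=\operatorname{Ker}(f\mapsto(f(x),s_i(f)))$. That description comes from the generators of $P(U(x,v_i;1))$: a monomial $a\odot\boldsymbol{X}^{\odot\boldsymbol{m}}$ is ranked lexicographically by $(a+\langle x,\boldsymbol{m}\rangle,\langle v_i,\boldsymbol{m}\rangle)$, i.e.\ first by value at $x$ and then by slope along $v_i$. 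Finally you prove surjectivity with tent functions.
\end{itemize}
Since rational functions are affine on each branch near $x$, your kernel is exactly the paper's germ congruence, so you are in effect reproving the cited propositions. Your version is self-contained and makes the isomorphism explicit, while the paper's is shorter but rests on \cite{JuAe7}.

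Surjectivity, which you flagged as the main obstacle, is in fact routine. The function $a\odot\bigodot_i h_i$ works, with $h_i$ integer-slope tents supported in a small star of $x$. The parallel-ray condition is vacuous because these functions are constant near every point at infinity.

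One omission: the degenerate case where $\varGamma_{\operatorname{par}}$ is a single point, with $d=0=n$, is not covered by your ``$d=1$'', but it is immediate.
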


\begin{proof}
The first statement follows directly from the proof of Theorem~\ref{thm4}.
For the second statement, let $Q_1, \ldots, Q_n$ be the distinct prime congruences on $K$ of Type F-$2$ included by $P$.
By \cite[Propositions~4.41 and 4.42]{JuAe7}, it is enough to check that $\bigcap_{i = 1}^n Q_i = \{ (f, g) \in K^2 \,|\, \exists U: \text{a neighborhood of }\varphi^{-1}(P) \text{ such that } f|_U = g|_U \}$, and it is clear by the proof of Theorem~\ref{thm4}.
\end{proof}

Note that if $n = 0$, then $K / P$ is isomorphic to $R_0 = \boldsymbol{T}$.

\begin{definition}
    \label{dfn5}
For a geometric point $P$ of $\operatorname{CSpec}(K)$, we call the valency of $\varphi^{-1}(P)$ the \textit{valency} of $P$.
If the valency of $P$ is different from two, then we call $P$ an \textit{intrinsic vertex} of $X$ (or $\operatorname{CSpec}(K)$).
\end{definition}

Let $v$ be the number of intrinsic vertices of $\operatorname{CSpec}(K)$ and $e$ half the total sum of the valencies of the intrinsic vertices of $\operatorname{CSpec}(K)$.
Then, the genus of $\varGamma_{\operatorname{par}}$ is canonically determined by the algebraic data:

\begin{corollary}[Genus]
    \label{cor6}
The genus of $\varGamma_{\operatorname{par}}$ coincides with $e - v + 1$.
\end{corollary}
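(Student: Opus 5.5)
The genus will be computed from a suitable model of $\varGamma_{\operatorname{par}}$ built out of the intrinsic vertices; the algebraic data then enter only through Theorem~\ref{thm5} and Corollary~\ref{cor5}. By Theorem~\ref{thm5}, $\varphi$ is a homeomorphism from $\varGamma_{\operatorname{par}}$ onto the geometric point set $X$. By Corollary~\ref{cor5} and Definition~\ref{dfn5}, the valency of a geometric point $P$ equals $\operatorname{val}(\varphi^{-1}(P))$, which is determined algebraically as the number of Type F-$2$ primes contained in $P$. For points at infinity the valency is $1$ by the convention in Subsection~\ref{subsection5.1}. Hence the intrinsic vertices of $\operatorname{CSpec}(K)$ correspond bijectively under $\varphi$ to the points of $\varGamma_{\operatorname{par}}$ of valency $\neq 2$. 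These are the finite points of valency $\neq 2$ together with all points at infinity. So $v$ and $e$ equal the analogous quantities computed on the curve itself.

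Next I take $V_0$, the set of points of $\varGamma$ of valency $\neq 2$. It is finite, since it lies in the vertex set of any model together with $\varGamma_\infty$. I would use $V_0$ as the vertex set of a model $(G,l)$: the edges are the closures of the connected components of $\varGamma\setminus V_0$. Each such component is an open interval, because every point of it has valency $2$. Then every edge endpoint lies in $V_0$, and by the handshake lemma $\#E(G) = \frac12\sum_{x\in V_0}\operatorname{val}(x) = e$. Also $\#V(G)=v$, so genus $=\#E(G)-\#V(G)+1 = e-v+1$.

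\emph{Main obstacle.} The delicate part is the degenerate case $V_0=\varnothing$, i.e.\ $\varGamma$ is a circle with every point of valency $2$. Here the construction produces no model with vertex set $V_0$. In that case $v=e=0$ and the genus is $1=e-v+1$, which I would check directly. If some component were a full circle while $V_0\neq\varnothing$, connectivity of $\varGamma$ would rule it out, so this case cannot occur. I also need to verify that a component with both ends at the same vertex gives a loop, consistent with the definition of graphs allowing loops. Finally, leaf edges of infinite length correspond to rays ending at points at infinity, so the valency-$1$ convention there makes the degree count match.
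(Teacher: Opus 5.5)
Your proposal is correct and matches the argument the paper leaves implicit (it states the corollary without a written proof). Through $\varphi$ and Definition~\ref{dfn5}, the intrinsic vertices are exactly the points of valency $\neq 2$; these form the vertex set of a minimal graph structure on $\varGamma_{\operatorname{par}}$; the handshake count gives $e$ edges; and the circle case is handled separately, as you do.

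One small point: if $\varGamma_{\operatorname{par}}\cong[-\infty,\infty]$, your graph on $V_0$ has a single edge joining two points at infinity. That edge cannot be identified with any $[0,l]$, so the graph is not a model in the paper's sense. You should therefore justify the final step by the first Betti number of a graph whose realization is homeomorphic to $\varGamma$, not by the formula "for any model".
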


We next show that the algebraic structure of $K$ also determines the intrinsic metric on $X$.

\begin{theorem}[Intrinsic Metric]
    \label{thm6}
There exists an intrinsic metric on $X$ determined purely by $K$.
%There exists an intrinsic metric on $X$ defined purely by the algebraic data of $K$.
This metric is independent of the choice of generators of $K$.
Moreover, under this metric, $\varphi$ becomes an isometry that preserves the parallelism of rays, i.e., an isomorphism of tropical curves with parallel rays.
\end{theorem}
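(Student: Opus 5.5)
We define the metric on $X$ from $K$ alone, by measuring the length of a path through how the elements of $K$ change along it. For geometric points $P, Q \in X$ with $Q$ a finite point, set
\begin{align*}
d_K(P,Q) := \sup\left\{ \left|[f]_P - [f]_Q\right| \,\middle|\, f \in K,\ [f] \text{ has slope at most one along every germ} \right\}.
\end{align*}
Here $[f]_P \in K/P \cong \boldsymbol{T}$ for a finite point $P$, via the isomorphism of Remark~\ref{rem5}, which is unique because a $\boldsymbol{T}$-algebra automorphism of $\boldsymbol{T}$ is the identity.

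The slope condition must also be stated algebraically. For each Type F-$2$ congruence $Q' \subset P$, the quotient $K/Q' \cong R_1$ is canonical up to the sign conventions built into $R_1$. We therefore take the $\boldsymbol{Z}$-component of the image of $f$ in $R_1$ as the outgoing slope of $f$ along the corresponding germ. Corollary~\ref{cor5} shows these germs are exactly the $\operatorname{val}(P)$ directions at $P$.

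We then impose that this component lies in $\{-1,0,1\}$ at every finite point. We must check that this recovers exactly the $1$-Lipschitz rational functions. The reason is that $R_1$ has no nontrivial $\boldsymbol{T}$-algebra automorphisms other than possibly $(a,i)\mapsto(a,-i)$, and that one is excluded because it does not respect $\boxplus$. So slopes are intrinsic. This gives independence of generators, since nothing in the definition uses any generators.

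The distance between finite points is then recovered by the standard fact for metric graphs: the shortest-path distance equals $\sup |f(x)-f(y)|$ over $1$-Lipschitz piecewise linear functions with integer slopes. To prove it, take $f = \max\{ \cdot \}$ built from $-\operatorname{dist}(x,\cdot)$, truncated appropriately, which lies in $\operatorname{Rat}(\varGamma)$.

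The main obstacle is showing this $f$ lies in $\operatorname{Rat}(\varGamma_{\operatorname{par}})$, i.e.\ has equal slopes on parallel rays. We handle it by truncating: $f = \max\{-\operatorname{dist}(x,\cdot), -\operatorname{dist}(x,y)\}$ is eventually constant on every ray, so all slopes at infinity are $0$ and the parallel-ray condition holds. Through $\varphi$ of Theorem~\ref{thm5} and the correspondence $[f]_{\varphi(x)} = f(x)$, this shows $d_K(\varphi(x),\varphi(y))$ equals the intrinsic distance on $\varGamma_{\operatorname{par}}$.

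Points at infinity are handled by declaring their distance to other points infinite. This is consistent with the extended metric, and can be made algebraic: there exist $f$ with unbounded values approaching $P_x$, namely $f$ with nonzero slope at infinity, detected by the Type I-$2$ quotient of Example~\ref{ex2}.

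Finally, $\varphi$ is an isometry by the computation above and a homeomorphism by Theorem~\ref{thm5}. Parallelism is preserved by Corollary~\ref{cor4}. Hence $\varphi$ is an isomorphism of tropical curves with parallel rays.
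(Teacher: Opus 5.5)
Your proposal is correct, but it takes a different route from the paper. The paper never characterizes the metric algebraically. It observes that $d_x=\operatorname{dist}(x,\cdot)$ already lies in $K$: its slope at infinity is $+1$ on every ray, so the parallel-ray condition holds automatically. It then defines the distance from a finite point $P$ to any $Q$ as $d_{\varphi^{-1}(P)}(\varphi^{-1}(Q))$, so $\varphi$ is an isometry by construction, and it delegates independence of generators to \cite[Proposition~4.6]{JuAe7}.

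Your sup formula is genuinely intrinsic. It ranges over elements whose slopes, read off from the Type F-$2$ quotients $K/Q'\cong R_1$, lie in $\{-1,0,1\}$. Invariance under $\boldsymbol{T}$-algebra isomorphisms and independence of generators then come for free. The isometry also becomes a real statement, proved by the Lipschitz upper bound and an explicit function for the lower bound. What this buys is an honest proof that the metric is determined by $K$; the paper's definition passes through $\varphi^{-1}$.

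The price is two inputs you state but do not quite establish.
\begin{itemize}
\item \emph{Rigidity of $R_1$.} The group automorphisms of $\boldsymbol{R}\times\boldsymbol{Z}$ fixing $\boldsymbol{R}\times\{0\}$ are $(a,i)\mapsto(a+ci,\pm i)$. Besides the sign flip you must also rule out the shears $c\neq 0$. They too fail to respect $\boxplus$: compare $(0,1)$ with $(\delta,0)$ for $0<\delta<c$, or with $(0,0)$ if $c<0$. So the automorphism group is in fact trivial.
\item \emph{Identification of the $\boldsymbol{Z}$-component with slope.} Rigidity only makes the $\boldsymbol{Z}$-component well defined; it does not show that it \emph{is} the outgoing slope. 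For that you need the explicit description of the Type F-$2$ primes inside $\varphi(x)$. For each germ $e$ at $x$, the map $f\mapsto(f(x),s_e(f))$ is a surjective $\boldsymbol{T}$-algebra homomorphism $K\to R_1$; it is surjective because $d_x$ has outgoing slope $1$ along $e$. Its kernel is a Type F-$2$ prime contained in $\varphi(x)$, and distinct germs give distinct kernels. Only then does the count in Corollary~\ref{cor5} show that these are all of them; the count alone does not.
\end{itemize}

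Two smaller points. The truncation you call the main obstacle is unnecessary, since $d_x$ itself lies in $K$ and already attains the supremum; this is exactly what the paper uses. Also, the expression $|[f]_P-[f]_Q|$ should be restricted to $f\neq-\infty$.
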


\begin{proof}
For any $x \in \varGamma_{\operatorname{par}} \setminus \varGamma_{\operatorname{par}, \infty}$, let $d_x$ be the distance function $\operatorname{dist}(x, \cdot)$ on $\varGamma_{\operatorname{par}}$.
It is a rational function on $\varGamma_{\operatorname{par}}$, namely, $d_x \in K$.
For any finite point $P \in X$ and any point $Q \in X$, define their distance by $d_{\varphi^{-1}(P)}(\varphi^{-1}(Q))$.
This is well defined by Theorem~\ref{thm5}.
Then $\varphi$ is an isometry, and preserves the parallelism of rays by Corollary~\ref{cor4}.
By \cite[Proposition~4.6]{JuAe7}, this intrinsic metric is independent of the choice of generators of $K$.
\end{proof}

Consequently, the topology induced by this metric agrees with the topology inherited from the congruence spectrum.

\begin{theorem}[Coincidence of Topologies]
    \label{thm7}
The restriction of the Zariski topology on $\operatorname{CSpec}(K)$ to the geometric point set $X$ naturally coincides with the metric topology induced by the intrinsic metric defined in Theorem~\ref{thm6}.
\end{theorem}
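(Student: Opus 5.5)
The plan is to reduce everything to the homeomorphism of Theorem~\ref{thm5} together with the standard fact that the topology of an abstract tropical curve coincides with the topology induced by its intrinsic (extended) metric. By Theorem~\ref{thm6}, the intrinsic metric on $X$ is \emph{defined} by transporting the distance functions $d_x$ along $\varphi$. Hence $\varphi : \varGamma_{\operatorname{par}} \to X$ is an isometry for this metric, and in particular a homeomorphism between $\varGamma_{\operatorname{par}}$ with its metric topology and $X$ with the metric topology. On the other hand, Theorem~\ref{thm5} says that the same map $\varphi$ is a homeomorphism from $\varGamma_{\operatorname{par}}$ (with its given topology as the underlying space of $(G,l)$) onto $X$ with the subspace Zariski topology. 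Composing $\varphi$ with $\varphi^{-1}$, the identity of $X$ is then a homeomorphism between the two topologies on $X$, provided the underlying topology of $\varGamma_{\operatorname{par}}$ agrees with its metric topology.

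So the key steps, in order, are as follows. First, recall from the proof of Theorem~\ref{thm6} that the metric on $X$ is $\rho(\varphi(x),\varphi(y)) = \operatorname{dist}(x,y)$, with the extended convention that points at infinity lie at infinite distance. Second, check that on $\varGamma_{\operatorname{par}}$ the topology coming from the identification of edges with $[0,l(e)]$ equals the metric topology. On finite edges this is immediate. At a point at infinity, however, the extended metric assigns infinite distance to all other points, so the metric balls are singletons. The metric topology should therefore be read as the topology on each ray $[0,\infty]$ in which neighborhoods of $\infty$ are the sets $(t,\infty]$, that is, the one-point compactification convention fixed in the definition of a tropical curve. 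Third, conclude via Theorem~\ref{thm5} and Remark~\ref{rem2} (which identifies Zariski-open subsets of $X$ with complements of congruence varieties, i.e.\ Euclidean-open subsets of the embedded curve).

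The main obstacle is the second step at points at infinity. There one must state precisely which topology the extended metric ``induces'', since a naive ball topology would make points at infinity isolated, which disagrees with the Zariski topology: by Example~\ref{ex2}, $P_x$ is approached by finite points along the ray. I would handle this by taking the metric topology to be the one in which a neighborhood base of $x \in \varGamma_{\operatorname{par},\infty}$ consists of the sets $\{x\} \cup \{ y \text{ on the ray of } x : d_{y_0}(y) > t \}$ for a fixed finite point $y_0$ on that ray. This is expressible through the functions $d_{y_0} \in K$, so it is still determined by $K$. I would then verify directly that these sets are exactly the Zariski neighborhoods of $P_x$ in $X$, using the description of $P_x$ as germs of functions near $x$.
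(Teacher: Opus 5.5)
Your proposal is correct and follows the paper's route: the paper's proof is just the observation that the homeomorphism $\varphi$ of Theorem~\ref{thm5} onto the Zariski subspace $X$ is an isometry by Theorem~\ref{thm6}, so the two topologies on $X$ agree. Your caveat at points at infinity is well taken and is left implicit in the paper: read literally, extended-metric balls would make $P_x$ isolated, yet $P_x$ is not Zariski-isolated, so the statement needs your one-point-compactification reading of the metric topology; once Theorem~\ref{thm5} is granted, your proposed direct check of the Zariski neighborhoods of $P_x$ is redundant.
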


\begin{proof}
This is an immediate consequence of Theorems~\ref{thm5} and \ref{thm6}.
\end{proof}

\bibliographystyle{plain}
\bibliography{references}
\end{document}